\providecommand{\introduce}[1]{\textit{#1}}
\crefname{hypothesis}{Hypothesis}{Hypotheses}
\title{Opinion Dynamics on Discourse Sheaves\thanks{Submitted to the editors DATE.
\funding{This work was funded by the Office of the Assistant Secretary of Defense Research \&
	Engineering through a Vannevar Bush Faculty Fellowship, ONR N00014-16-1-2010.}}}
\author{Jakob Hansen\thanks{Department of Mathematics, University of
    Pennsylvania, Philadelphia, PA} 
  (\email{jhansen@math.upenn.edu})
\and Robert Ghrist\thanks{Department of Mathematics and Department of Electrical
  and Systems Engineering, University of Pennsylvania, Philadelphia, PA 
  (\email{ghrist@math.upenn.edu}).}}
\begin{document}

\maketitle

% REQUIRED
\begin{abstract}
  We introduce a novel class of Laplacians and diffusion dynamics on {\em discourse sheaves} as a
  model for network dynamics, with application to opinion 
  dynamics on social networks. These {\em sheaves} are algebraic data structures tethered to 
  a network (or more general space) that can represent various modes of communication, 
  including selective opinion modulation and lying. After introducing the sheaf model, 
  we develop a sheaf Laplacian in this context and show how to evolve both opinions
  and communications with diffusion dynamics over the network.
  Issues of controllability, reachability, bounded confidence, and harmonic extension
  are addressed using this framework.   
\end{abstract}

% REQUIRED
\begin{keywords}
  social network, cellular sheaves, opinion dynamics, Laplacians 
\end{keywords}

% REQUIRED
\begin{AMS}
  91D30, 55N30 (MSC2020)
\end{AMS}

\section{Introduction}
Social networks are one of the principal motivating examples for the study of complex networks. 
Among the many interesting problems associated with social networks, {\em opinion dynamics} --- the study  
of how preferences or opinions emerge and evolve --- are especially interesting, blending ideas 
from dynamical systems and graph theory. 
Structural effects of the network on opinion dynamics began with the analysis of linear dynamical models
\cite{taylor_towards_1968,IMGroot,friedkin_social_1990} and have developed into more sophisticated formulations \cite{deffuant_mixing_2000,dittmer_consensus_2001,hegselmann_opinion_2002}, including features such as bounded confidence.
This paper introduces both a novel model and a novel set of tools for the analysis of opinion dynamics. 
After a brief review of classical opinion dynamics models, we survey the results of this paper. 

% ----------------------------------------------------------------------------------
\subsection{Models of Opinion Dynamics}
\label{sec:Models}
% ----------------------------------------------------------------------------------

Early models of opinion dynamics used linear network dynamics to evolve
single-dimensional preferences. Consider a social network represented as an
undirected graph $G=(V,E)$ of vertices and edges. The state space for
single-opinion real dynamics is $\R^{V}$, with $x \in \R^V$ representing a
distribution of preferences $x_v\in\R$ at each vertex, ranging from positive to
indifferent (null) to negative. Graph-based linear dynamics evolve preferences
over time.

In continuous-time models, the graph Laplacian, $L$, generates dynamics via the graph diffusion equation
\begin{equation}
\label{eqn:continuoustimeopinions}
  \frac{dx}{dt} = -\alpha L x, \quad : \quad \alpha>0 ,
\end{equation}
perhaps with slight modifications %like a constant term
\cite{abelson_mathematical_1964,taylor_towards_1968}. Analogous
models~\cite{french_formal_1956,IMGroot,lehrer_when_1976} were studied
in discrete time, with a (typically stochastic) state evolution matrix following the sparsity pattern
of the adjacency matrix, $A$, of the social network:
\begin{equation}\label{eqn:discretetimeopinions}
  x[t+1] = A x[t].
\end{equation}
Variations with more terms added additional richness to the models~\cite{friedkin_social_1990}.

Without modification, these linear graph models result in asymptotically stable equilibria at
a single consensus opinion over the network. While this global fixed consensus may be useful in some 
situations (e.g., flocking in robotics/swarm applications \cite{jadbabaie_coordination_2003,tanner_stable_2003}), 
it does not represent the typical behavior of opinion distributions in social
networks. Indeed, a central problem in the study of opinion dynamics is the
construction of simple models that replicate one of the most salient features of
real-world opinions: the existence of polarization or failure to come to
consensus, known as the \introduce{community cleavage problem}
\cite{friedkin_problem_2015}. The earliest approaches to solving this problem
added extra constant terms to dynamics of the
form~\eqref{eqn:continuoustimeopinions} or~\eqref{eqn:discretetimeopinions} that
encouraged diversity in opinions across the
network~\cite{taylor_towards_1968,friedkin_social_1990,friedkin_social_1999}.
These extra terms were interpreted as external influences or the effect of
individual stubbornness on opinions. 

Much recent work has focused on a nonlinear extension of the discrete-time
dynamics that adds {\em bounded confidence} to agents' evaluation of their
neighbors' opinions \cite{deffuant_mixing_2000,dittmer_consensus_2001,hegselmann_opinion_2002}.
The most famous such model, popularized by Hegselmann and
Krause, posits a threshold $r$ for opinion sharing. If agents $i$ and $j$ have
opinions that differ by more than $r$, they do not communicate. Otherwise, they
influence each other's opinions linearly as in the discrete-time dynamics above.
That is,
\begin{equation}
	x_i[t+1] = \sum_{i \sim j} A_{ij} x_j \mathbf{1}_{\abs{x_i - x_j}<r}.
\end{equation}	
These dynamics are complex enough to produce bifurcated opinion distributions
without external influences, while admitting some direct analysis. 

Not all work on opinion dynamics has used the Laplacian or adjacency-based
linear network interaction formulations above. Other popular models include
dynamics with discrete opinion
spaces~\cite{sznajd-weron_opinion_2000,castellano_statistical_2009} based on
formalisms from statistical physics, as well as non-agent-based models that seek
to understand the overall distribution of opinions in a population without
tracking any individual's stance~\cite{weidlich_statistical_1971,canuto_eulerian_2012}.

In this paper, we will focus on network-based continuous-time models for opinion
dynamics with continuous opinion spaces; discrete-time examples can typically be
extracted from these via Euler discretization. Our models will find most kinship
with those that approach opinion dynamics from a control-theoretic or systems
analysis perspective, like those discussed
in~\cite{proskurnikov_tutorial_2017,proskurnikov_tutorial_2018}.

% ----------------------------------------------------------------------------------
\subsection{Contributions of This Work}
\label{sec:Contributions}
% ----------------------------------------------------------------------------------

In this paper, we introduce a novel approach to networked opinion dynamics, using 
ideas from {\em sheaf theory}. This subject, commonly used in algebraic topology and algebraic 
geometry, is vast
\cite{kashiwara_sheaves_1990,dimca_sheaves_2004,gelfand_methods_2003,hartshorne_algebraic_1977},
but has a simple, graph-based reduction that amounts
to little more than a networked system of linear transformations (see \S\ref{sec:CellularSheaves}
for definitions). There are three key ideas from sheaf theory that we use:
\begin{enumerate}
	\item 
	{\em Cellular sheaves} are a topological data structure for graphs (or more general cell complexes) 
	\cite{curry_sheaves_2014}, which, we argue, permits modeling of very sophisticated opinion state spaces and 
	communication strategies.
	\item 
	{\em Sheaf cohomology} is an algebraic-topological invariant of the sheaves over graphs. 
	We demonstrate its use in computing obstructions to solving problems of opinion dynamics on graphs.  
	\item 
	{\em Sheaf Laplacians} are far-reaching extensions of the graph Laplacian \cite{hansen_toward_2019}
	and extend ideas of harmonic flow to a wide array of opinion dynamics models.  
\end{enumerate}
Although not a familiar toolset within network science, sheaf-theoretic methods have a remarkable range of
expressiveness and can encode more realistic modes of expression of opinions, including exaggeration, modulation, 
and selective obfuscation. 

After setting up the relevant mathematical structures in \S\ref{sec:CellularSheaves}, we proceed
directly to the contributions, summarized below.
\begin{enumerate}
	\item In \S\ref{sec:DiscourseSheaves}, we introduce our model of opinions over a social network via a 
	cellular sheaf. In this model, each agent has an {\em opinion space} (a vector space with dedicated basis) and
	each edge has an independent {\em discourse space} (a vector space with basis of topics up for discussion). 
	Expression of opinions is programmed via linear transformations from opinion to discourse spaces, allowing 
	for private opinions selectively expressed or combined into policies. 
	\item In \S\ref{sec:SheafDiffusion}, we use the sheaf Laplacian to set up diffusion dynamics on the 
	discourse sheaf, proving asymptotic convergence of initial opinions to a (literal) {\em harmonic} state: all
	agents express opinions in harmony with neighbors, though the private opinions of neighbors may be distinct (or even incomparable). 
	\item In \S\ref{sec:HarmonicExtension}, we use sheaf cohomology to characterize whether certain problems
	of extension and convergence have solutions. For example, if certain agents are inflexible and  
	will not modify their opinions, does there exist a unique global solution through modification of others' opinions? 
	We show that this is a problem of {\em harmonic extension}, determinable via a cohomology computation.
	\item The question of manipulation of a system through inflexible agents leads naturally to questions of 
	a control-theoretic nature. In \S\ref{sec:Control}, sheaf cohomology is shown
  to determine the 
	controllability and observability of opinions. 
	\item Using the Laplacian to evolve individual opinions in order to come to harmonic expression is only half 
	the picture. One could instead keep opinions fixed and evolve the expression of opinions in order to reduce
	discord. In \S\ref{sec:EvolvingExpression}, we extend the Laplacian diffusion model to the sheaf maps that
	express opinions. This leads to the interesting phenomenon of agents ``learning to lie'' to reach concord.
	The natural extension to joint opinion-expression diffusion is given in \S\ref{sec:Joint}.
	\item Finally, in \S\ref{sec:NonlinearLaplacians}-\ref{sec:AntagonisticDynamics}, we move from linear 
	to (slightly) nonlinear dynamics of opinion distributions on 
	discourse sheaves, showing how to mimic the {\em bounded confidence} models of
  \cite{hegselmann_opinion_2002} and the antagonistic social dynamics of \cite{altafini_predictable_2015}.
\end{enumerate}

% ============================================================================================
\section{Introduction to Cellular Sheaves}
\label{sec:CellularSheaves}
% ============================================================================================
For purposes of this paper, a cellular sheaf is a data structure augmenting a graph that describes
consistency relationships for algebraic data attached to the graph. For simplicity, we work with
vector spaces and linear transformations thereof. At one or two points, it will be convenient if
not necessary to have all vector spaces real, finite-dimensional, and with an inner product structure. 
For the remainder of this work, we will implicitly assume these conditions: see
\cite[Section 3.3]{hansen_toward_2019}
for details on how to deal with more general Hilbert spaces. Further, for
simplicity, we will assume that every vector space has a canonical orthonormal
basis identifying it with $\R^n$ for some $n$, and identify linear maps with
their representing matrices and the inner product $\ip{x,y}$ with the standard
inner product $x^Ty$ on $\R^n$.

% ----------------------------------------------------------------------------
\subsection{Definitions}
\label{sec:Defs}
% ----------------------------------------------------------------------------
\begin{def}
\label{def:sheaf}
  Let $G$ be a graph. A cellular sheaf $\Fc$ on $G$ is specified by the
  following data:
  \begin{itemize}
  \item
    a vector space $\Fc(v)$ for each vertex $v$ of $G$,
  \item
     a vector space $\Fc(e)$ for each edge $e$ of $G$, and
  \item
    a linear map $\Fc_{v \face e} : \Fc(v) \to \Fc(e)$ for each incident
    vertex-edge pair $v \face e$.
  \end{itemize}
\end{def}
The vector spaces $\Fc(v)$ are called the \introduce{stalks} over $v$, and the linear
maps $\Fc_{v \face e}$ are the \introduce{restriction maps}. The terminology seems unmotivated
and is inherited from more general sheaf theory \cite{curry_sheaves_2014}. Thinking in terms of data (stalks) 
and communication (restriction maps) is perhaps preferable in the context of this paper, 
see Figure~\ref{fig:sheafcartoon}.

% ********************************************************************************************
\begin{figure}
\label{fig:sheafcartoon}
	\centering
	\includegraphics[width=5.0in]{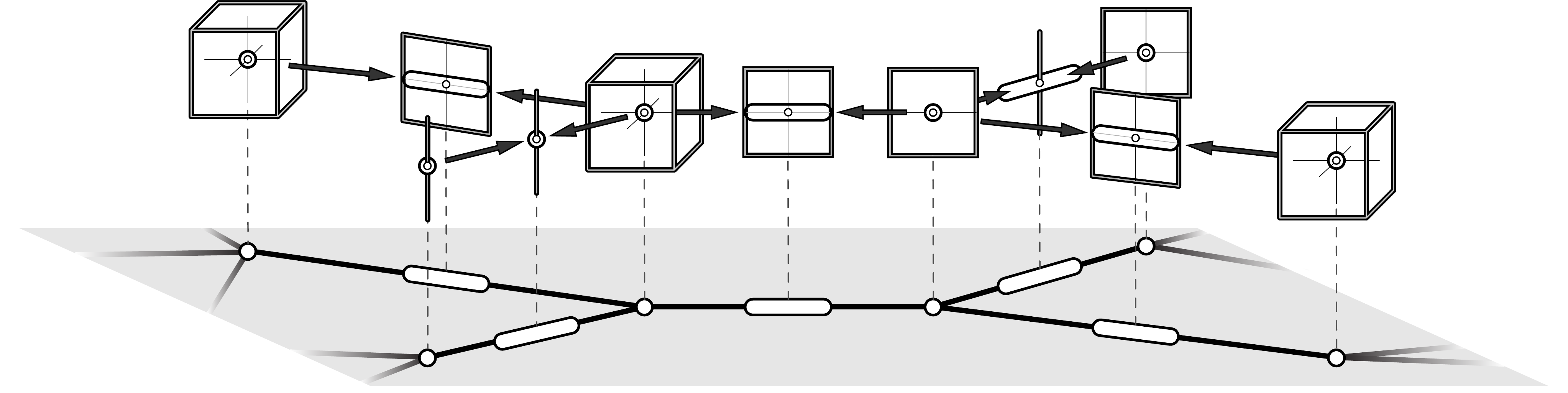}
	\caption{A cartoon diagram of a cellular sheaf over a graph. Vector spaces of varying dimensions
	over vertices and edges are attached via linear transformations, following the adjacency
	pattern of the graph. The entire system of linear transformations forms the cellular sheaf.}
\end{figure}
% ********************************************************************************************

The simplest example of a cellular sheaf is the \introduce{constant sheaf}.
For $V$ a fixed vector space, the constant $V$-sheaf on $G$, denoted $\underline V$, attaches
a copy of $V$ to each vertex and edge (all stalks are $V$), with
$\underline{V}_{v\face e} = \id_V$ for all incident vertex-edge pairs (all restriction maps
are the identity). One interprets a constant sheaf as specifying that all vertices access
the same data (from $V$) and communicate perfectly with their neighbors over edges. 

There is more to a sheaf than its stalks: replacing all the identity maps in $\underline V$ 
with zero-maps ($\Fc_{v\face e}:V\mapsto{0}$) gives a very different sheaf in which all
communication is devoid of information. Such a sheaf is a jumble in contrast to the tightly 
coordinated constant sheaf $\underline{V}$.

% ----------------------------------------------------------------------------
\subsection{Sections and Sheaf Cohomology}
\label{sec:Cohomology}
% ----------------------------------------------------------------------------
If one visualizes a sheaf of vector spaces over a graph as being a network of vector 
spaces and linear transformations, then one is naturally led to questions of how to 
generalize the familiar notions of linear algebra --- kernels, images, etc. --- to such
a networked structure. This is the impetus for {\em homological algebra} and the 
{\em cohomology} of a sheaf.

One begins by bundling all the data over vertices and over edges into a pair of 
conglomerated vector spaces. These are called spaces of \introduce{cochains}
\begin{align}
  C^0(G;\Fc) &= \bigoplus_{v \in V(G)} \Fc(v) \\
  C^1(G;\Fc) &= \bigoplus_{e \in E(G)} \Fc(e).
\end{align}
Elements of $C^0$ are called \introduce{0-cochains}: such an
$x \in C^0(G;\Fc)$ consists of a choice of data, $x_v \in \Fc(v)$, for every vertex $v$ of $G$. 
Likewise, an element $y\in C^1(G;\Fc)$ is called a \introduce{1-cochain} and is a choice of data indexed
over the edges of $G$. 

In the same manner that edges and vertices are stitched together to form a graph, 
data over the vertices (0-cochains) and edges (1-cochains) are tied together 
via a linear transformation --- the \introduce{coboundary} map, $\delta \colon C^0(G;\Fc) \to C^1(G;\Fc)$.
To define $\delta$ explicitly, choose a fixed but arbitrary orientation on each edge $e$. Then
the evaluation of $\delta$ on an oriented edge $e = u\to v$ is defined as follows:  
\begin{equation}
(\delta x)_e = \Fc_{v \face e} x_v - \Fc_{u \face e} x_u .
\end{equation}
The orientation merely serves as a choice of basis elements for defining the difference
operation in $\delta$. That choice is irrelevant, as what one cares about is the
kernel of $\delta$. 

If one thinks about the coboundary $\delta$ as a measure of ``disagreement'' of data
across an edge, then $\ker\delta$ is the subspace of $C^0(G;\Fc)$ consisting of choices
of data over the vertex set which ``agree'' over the edges. That is, for
every $e = u \sim v$, $\Fc_{u \face e} x_u = \Fc_{v \face e} x_v$. The set of all 
such solutions to the global constraint satisfaction problem of the sheaf has the
structure of a vector subspace of $C^0(G;\Fc)$ and goes by the title of {\em cohomology}.

\begin{def}
	\label{def:cohomology}
	For $\Fc$ a cellular sheaf on a graph $G$, the \introduce{zeroth cohomology} $H^0(G;\Fc)$, 
	also known as the space of \introduce{global sections} of $\Fc$, is
\begin{equation}
	H^0(G;\Fc) = \ker\delta \subset C^0(G;\Fc).
\end{equation}
\end{def}

The global sections of a sheaf are the global solutions to the networked
system of constraint equations programmed into the restriction maps $\Fc_{u\face e}$ 
of the data structure over the graph. The cohomological terminology comes from the
algebraic topology of sheaves \cite{kashiwara_sheaves_1990,dimca_sheaves_2004,gelfand_methods_2003} 
--- a beautiful theory that can be ignored by the end-user of the models in this paper 
(but which secretly animates many of the results).

Related to the space of global sections are the subspaces of \introduce{local sections}. 
For $A$ a subgraph of $G$, we let $C^0(A;\Fc) =
\bigoplus_{v \in V(A)} \Fc(v)$ and $C^1(A;\Fc) = \bigoplus_{e \in E(A)} \Fc(e)$.
The coboundary $\delta$ restricts to a map $C^0(A;\Fc) \to C^1(A;\Fc)$; its
kernel is the space of local sections over $A$, denoted
$H^0(A;\Fc)$. This is the subspace of $C^0(A;\Fc)$ which is consistent over
every edge in $A$.

Dual to the local sections over $A$ is the cohomology relative to $A$. We let
$C^0(G,A;\Fc) = \bigoplus_{v \notin V(A)} \Fc(v)$ and $C^1(G,A;\Fc) =
\bigoplus_{e \notin E(A)} \Fc(e)$, and restrict $\delta$ to a map between these
spaces. The \introduce{degree 0 relative cohomology} is $H^0(G,A;\Fc) = \ker \delta|_{C^0(G,A;\Fc)}$. Local
sections $H^0(A;\Fc)$ should be thought of as assignments to a subset of
vertices that are consistent on $A$, while the relative cohomology $H^0(G,A;\Fc)$
can be viewed as global sections of $\Fc$ on $G$ that vanish on $A$.
\begin{ex}
  For the constant sheaf $\underline \R$ on a connected graph $G$,
  $H^0(G;\underline \R)$ is a one-dimensional vector space spanned 
  by the constant functions on vertices of $G$. For any nonempty subgraph $A$
  of $G$, $H^0(A;\underline \R)$ consists of functions on the vertices of $A$
  which are locally constant on the subgraph; its dimension is the number of
  connected components of $A$. The relative cohomology $H^0(G,A;\underline
  \R)$ is zero-dimensional, since any constant function which is zero on $A$ 
  must be zero everywhere on $G$. 
\end{ex}

% ----------------------------------------------------------------------------
\subsection{The Sheaf Laplacian}
\label{sec:SheafLaplacian}
% ----------------------------------------------------------------------------
Recall that for a graph with signed incidence matrix $B$, the 
\introduce{graph Laplacian} is given by $L=BB^T$. 
These are combinatorial versions of the familiar second-order differential
operator with enormous applicability across combinatorics, data science, and more 
\cite{chung_spectral_1992,belkin_laplacian_2003,coifman_diffusion_2006}. 
Less well-known in applied mathematics is the definition of Laplacians of complexes of 
sheaves of inner-product spaces over topological spaces: these arise in Hodge theory 
and algebraic geometry \cite{nicolaescu_lectures_2007}.
Between the graph Laplacian and Hodge Laplacian lies a mean notion of a Laplacian for
cellular sheaves on graphs \cite{hansen_toward_2019}. 

The construction is uncomplicated. 
Observe that the coboundary $\delta$ of the constant sheaf $\underline{\R}$ 
with stalk $\R$ equals the transposed signed incidence matrix $B^T$ 
of the graph $G$. For a sheaf $\Fc$ over $G$, $\delta$ may be seen as a 
generalized incidence matrix for $\Fc$. The potential variation in stalk
dimensions means that $\delta$ is a block matrix, and the restriction maps
determine the block entries, with sparsity pattern determined by the structure of $G$.
\begin{def}
	\label{def:Laplacian}
	For a sheaf $\Fc$ on a graph $G$, the \introduce{sheaf Laplacian} is 
\begin{equation}
	L_\Fc = \delta^T \delta \colon C^0(G;\Fc)\to C^0(G;\Fc). 
\end{equation}
\end{def}
Just as the graph Laplacian does not depend on the orientations chosen for the
edges in the signed incidence matrix, the sheaf Laplacian does not depend on the
choice of orientations for the construction of the coboundary $\delta$.

The following theorem may seem trivial in the context of sheaves over graphs, but it 
stems from deeper results (on sheaves over higher-dimensional
cell complexes and with relations to higher cohomologies).  

\begin{theorem}[Hodge Theorem]
For $\Fc$ a sheaf on a graph $G$ as above, 
\begin{equation}
	H^0(G;\Fc) = \ker L_\Fc .
\end{equation}
\end{theorem}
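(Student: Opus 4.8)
The plan is to reduce the statement to the elementary fact that $\ker(\delta^T\delta)=\ker\delta$ for a linear map between finite-dimensional inner-product spaces. Indeed, Definition~\ref{def:cohomology} gives $H^0(G;\Fc)=\ker\delta$ and Definition~\ref{def:Laplacian} gives $L_\Fc=\delta^T\delta$, so the theorem is equivalent to the assertion that these two subspaces of $C^0(G;\Fc)$ coincide. I would prove the two inclusions separately.

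The inclusion $\ker\delta\subseteq\ker L_\Fc$ is immediate and uses only linearity: if $\delta x=0$ then $L_\Fc x=\delta^T(\delta x)=\delta^T 0=0$. The reverse inclusion $\ker L_\Fc\subseteq\ker\delta$ is where the inner-product hypothesis is essential. Assuming $\delta^T\delta x=0$, I would pair this against $x$ and use the adjunction between $\delta$ and $\delta^T$ (which, under the standing identification of each stalk with $\R^n$ equipped with its standard inner product, is exactly the transpose relation $x^T\delta^T\delta x=(\delta x)^T(\delta x)$) to compute
\begin{equation}
0=\ip{\delta^T\delta x,\,x}=\ip{\delta x,\,\delta x}\ge 0,
\end{equation}
with equality forcing $\delta x=0$ by positive-definiteness of the inner product on $C^1(G;\Fc)$. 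Hence $x\in\ker\delta$, and combining the inclusions yields $\ker L_\Fc=\ker\delta=H^0(G;\Fc)$.

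As the authors signal, there is essentially no obstacle at this level of generality; the content is entirely in the bookkeeping of the inner products. The one point I would be careful to state is that the inner product on the cochain space $C^1(G;\Fc)=\bigoplus_{e}\Fc(e)$ is the orthogonal direct sum of the stalkwise inner products, so that $\ip{\delta x,\delta x}=0$ genuinely forces the cochain $\delta x$ to vanish on every edge, i.e.\ $\Fc_{v\face e}x_v=\Fc_{u\face e}x_u$ for each $e=u\sim v$. This also harmonizes with the earlier remark on orientation-independence, since the quantity $\ip{\delta x,\delta x}$ is insensitive to the sign choices defining $\delta$. The genuinely ``deeper'' statement alluded to in the excerpt --- a full Hodge decomposition relating higher Laplacians to higher cohomology on cell complexes --- is not needed here; in degree $0$ on a graph the result is exactly the kernel identity above.
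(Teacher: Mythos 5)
Your proof is correct and complete: the identity $\ker(\delta^T\delta)=\ker\delta$ via $0=\ip{\delta^T\delta x,x}=\norm{\delta x}^2$ is exactly the standard argument, and your care about the inner product on $C^1(G;\Fc)$ being the orthogonal direct sum of the edge-stalk inner products is the right point to make explicit. The paper itself omits any proof of this theorem (deferring to the general spectral theory of cellular sheaves in the cited reference), so there is no alternative route to compare against; your argument is precisely the one the authors implicitly rely on.
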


\begin{ex} 
The well-known fact that the kernel of the graph Laplacian is the space of locally
constant functions on $G$ follows from applying the Hodge theorem to the constant sheaf on $G$.
\end{ex}

A straightforward computation shows that for a 0-cochain $x\in C^0(G;\Fc)$, 
the value of the sheaf Laplacian at a given vertex $v$ is
\begin{equation}
  (L_\Fc x)_v = \sum_{v, u \face e} \Fc_{v \face e}^T(\Fc_{v \face e} x_v -
  \Fc_{u \face e}x_u).
\end{equation}
This implies that the matrix of $L_\Fc$ has a block structure, with diagonal
blocks $L_{vv} = \sum_{v \face e} \Fc_{v\face e}^T\Fc_{v \face e}$ and
off-diagonal blocks $L_{vu} = - \Fc_{v \face e}^T\Fc_{u \face e}$. 

In the next section, we will interpret $(L_\Fc x)_v$ as a measure of the average disagreement 
of agent $v$ with its neighbors, or equivalently, of the disagreement of $v$ with an average
of its neighbors.

\begin{ex}
	The sheaf in Figure~\ref{fig:sheaflaplacian} has a coboundary map represented by the matrix
	\begin{equation}
    \delta = 
    \left[
      \begin{array}{c|cc|c|cc}
   -2 & 1 & -2 & 0  & 0  & 0  \\\hline
	0 & 1 & 1  & -1 & 0  & 0 \\\hline
	0 & 0 & 0  & -1 & -1 & 1 \\\hline
	0 & 0 & 0  & 0  &  0 & 1 \\
    1 & 0 & 0  & 0  & -1 & 0
      \end{array}
    \right] ,
	\end{equation}
	and therefore its Laplacian is
	\begin{equation}
	L_\Fc = \delta^T \delta =
  \left[
    \begin{array}{c|cc|c|cc}
      	5 & -2 & 4 & 0 & -1 & 0 \\\hline
	   -2 & 2 & -1 & -1 & 0 & 0 \\
	    4 & -1 & 5 & -1 & 0 & 0 \\\hline
	    0 & -1 & -1 & 2 & 1 & -1 \\\hline
	   -1 & 0 & 0 & 1 & 2 & -1 \\
        0 & 0 & 0 & -1 & -1 & 2
    \end{array}
      \right].
	\end{equation}
	The block columns of $\delta$ and $L_\Fc$ correspond to the stalks $\Fc(v_i)$ for $i=1,\ldots,4$.
\end{ex}

% ********************************************************************************************
\begin{figure}
\label{fig:sheaflaplacian}
	\centering
	\includegraphics[width=3.0in]{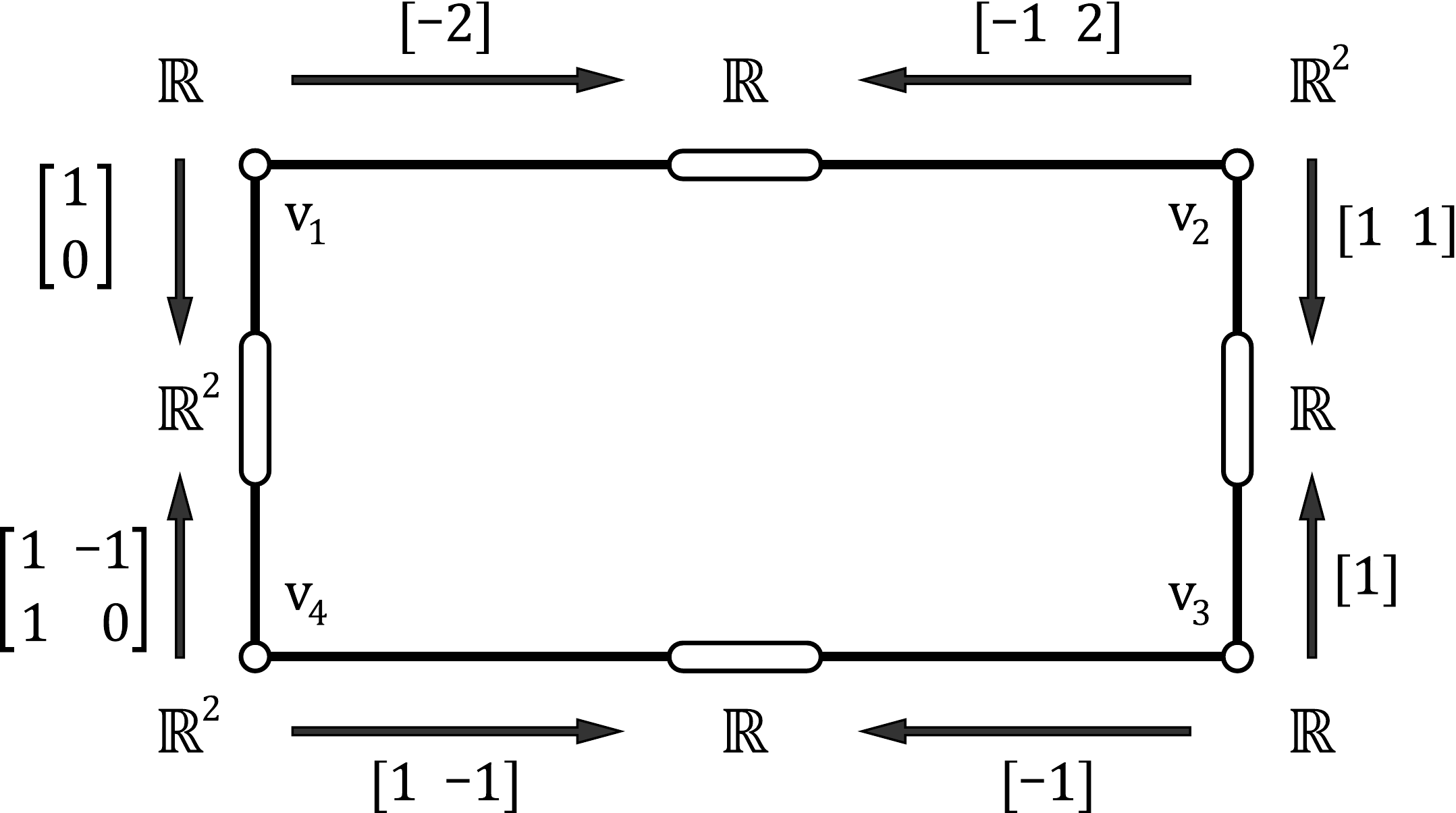}
	\caption{A cellular sheaf over a simple cyclic graph. All stalks are of dimension one or two.}
\end{figure}
% ********************************************************************************************

% ============================================================================================
\section{Discourse Sheaves}
\label{sec:DiscourseSheaves}
% ============================================================================================

We introduce a cellular sheaf model for opinions and discourse for which
Laplacian diffusion is an effective and computable method.

{\bf Construction:}
Given a social network presented as a graph $G$ with vertices representing agents and
(undirected) edges representing pairwise communication, consider the following
\introduce{discourse sheaf} $\Fc$. Each agent (vertex) $v$ has an \introduce{opinion space}, 
a real vector space with basis some collection of topics.  As with classical opinion
dynamics models, points on each axis correspond to negative, neutral, or positive opinions on
the topic, with a positive/negative intensity registered by the scalar value. This
opinion space comprises the stalk $\Fc(v)$ of the discourse sheaf over $v$. A choice
of element $x_v\in\Fc(v)$ is a vector recording the intensities of opinions or preferences 
on each of the \introduce{basis topics}. 

Given an edge $e$ between vertices $u$ and $v$, it is presumed that there is a certain set
of basis topics about which the two agent discuss. These are not necessarily the same as 
any of the basis topics from which $\Fc(u)$ or $\Fc(v)$ are generated; however, they do form the
basis of an abstract \introduce{discourse space}, $\Fc(e)$, the stalk over $e$.

Each agent represents their opinions on the topics of discussion by formulating
stances as a linear combination of existing opinions on personal basis topics. 
These expressions of opinion are linear transformations $\Fc_{u \face e}:\Fc(u)\to\Fc(e)$
and $\Fc_{v \face e}:\Fc(v)\to\Fc(e)$. If the agents hold opinions $x_u\in\Fc(u)$ and 
$x_v\in\Fc(v)$, then they have expressed consensus when $\Fc_{u\face e}(x_u)=\Fc_{v\face e}(x_v)$.
This is the local consistency condition implicit in the construction of a sheaf 
--- each edge imposes a linear consistency constraint on the stalks of its incident vertices.
Note that this does not imply that $u$ and $v$ have the same opinions: it means
that their expressions of personally held opinions have the appearance of agreement.

% ********************************************************************************************
\begin{figure}
\label{fig:discoursesheaf}
	\centering
	\includegraphics[width=4.75in]{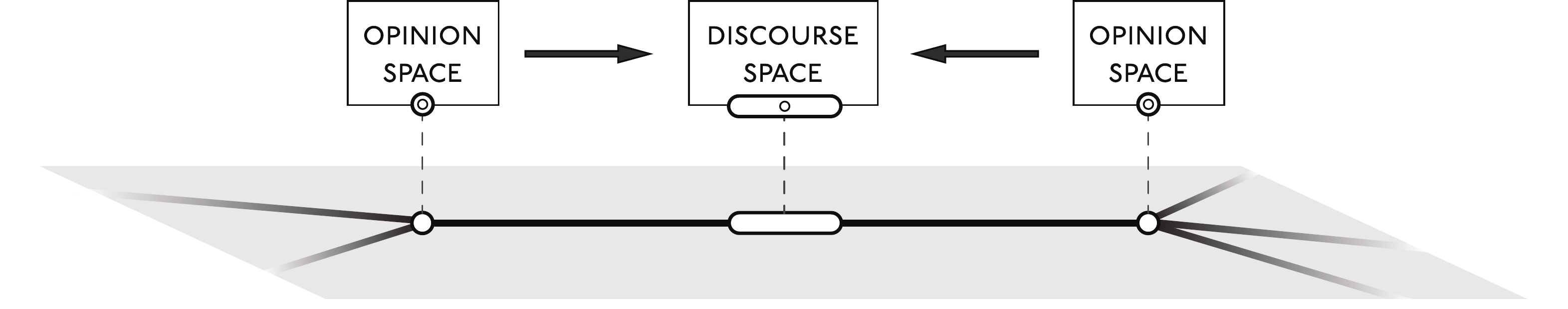}
	\caption{In a discourse sheaf, stalks over vertices are individual opinion spaces, stalks over edges 
	are discourse spaces, and restriction maps are expressions of opinions on the topics of discourse, 
	formulated linearly from basis opinions.}
\end{figure}
% ********************************************************************************************

The apparatus of \S\ref{sec:CellularSheaves} becomes clearer in the 
context of discourse sheaves. 
\begin{itemize}
	\item The constant sheaf $\underline{\R^n}$ is a discourse sheaf in which
    every agent
	has an opinion on the same $n$ basis topics, all of which are precisely expressed and discussed
	without embellishment. This is the implicit structure in most of the literature on 
	opinion dynamics. 
	\item A 0-cochain $x\in C^0(G;\Fc)$ is a private opinion distribution on the set of agents.
	\item A 1-cochain $\xi\in C^1(G;\Fc)$ is a distribution of expressed opinions over
	all pairwise agent discussions in the network. 
	\item The coboundary map $\delta:C^0\to C^1$ registers the ``difference of expression'' 
	between agents based on the expression of personal opinions. 
	\item The sheaf Laplacian $L_\Fc:C^0\to C^0$ registers the ``discord'' in the system. The 
	value of $L_\Fc(x)$ on each vertex $v$ represents the difference between $x_v$ (the opinions of 
	$v$) and the opinions which would bring $v$ in harmony with all its neighbors. 
	\item The zeroth cohomology $H^0(G;\Fc)$ computes the vector space of global 
	sections --- the space of opinion distributions in which all expressions
	of opinions are in harmony. As noted in Theorem \ref{thm:sheafdiffusionsoln}, these are literally
	{\em harmonic} (in the kernel of the Laplacian).
	\item Relative cohomology $H^0(G,A;\Fc)$ with respect to a subgraph $A\subset G$ represents
	harmonic opinion distributions which vanish on $A$. This is a measure of independence of 
	the agents in $A$ from the rest of the social network.  
\end{itemize}
The discourse sheaf model is, in one sense, a mild generalization of the usual consensus
problem over graphs. However, the ability to {\em program} a sheaf with  
linear transformations permits a number of features not present in the literature. 
Consider the simple example of three agents A, B, and C, all in pairwise
communication. 
\begin{itemize}
	\item Because stalks vary from vertex-to-vertex, the discourse sheaf permits agents to 
	have opinions on private basis topics. Agents A, B, and C need not have {\em any} basis topics in common.
	\item Because edge stalks are not identical to vertex stalks, the discourse sheaf model
	does not require everyone to share all their opinions with every neighbor; indeed, the
	topics for discussion need not relate at all to basis opinions of agents. Agents A and B 
	might be discussing whether to eat lunch at the nearby pub. The edge stalk (favorability of 
	dining at the pub) may not be something on which either A or B has a basis opinion. 
	\item The restriction maps allow for the formation of {\em policies} from {\em principles}.
	For example, if agent A has a strong basis-opinion preference for sandwiches and is neutral about noise, 
	the restriction map to the edge stalk could express a preference for dining at the (noisy, 
	sandwich-renowned) pub. Agent B, who has basis opinions about walking long distances (dislikes) 
	and quick meals (prefers) might have a restriction map that expresses dislike for the 
	(not nearby) pub.
	\item Positive scalar multiplication acts both on vectors (intensifying or damping opinions) and on 
	restriction maps (exaggerating or modulating expressed opinions). Negative scalar multiplication
	in a restriction map permits falsehoods: one can model agents who lie. Such dissembling or 
	deception can be done selectively. What C says to B need not match what C says to A 
	(even if they are discussing the same topic). 
	\item There are multiple ways to set up opinion dynamics on a discourse sheaf. We begin, following
	the classical literature, by having individual agents change their opinions over time. 
	This is perhaps not how real people engage in discourse. A different mode of evolution would
	permit {\em expression} of opinions to change, in order to bring discourse to a more 
	harmonious state. This is achievable in the sheaf model by setting up dynamics on the 
	restriction maps. Co-evolution of both opinion and expression is achievable in this model.
\end{itemize}

% =========================================================================================
\section{Sheaf Diffusion}
\label{sec:SheafDiffusion}
% =========================================================================================
Just as the graph Laplacian forms the basis for simple linear opinion dynamics,
so does the sheaf Laplacian on discourse sheaves. Consider the heat equation
\begin{equation}
\label{eqn:sheafheateqn}
  \frac{dx}{dt} = -\alpha L_\Fc x, \quad : \quad \alpha>0
\end{equation}
on $x \in C^0(G;\Fc)$. That is, $x$ represents an opinion distribution, where
$x_v \in \Fc(v)$ is the opinion of individual $v$. The diffusion dynamics tend
to push the value at a node $v$ toward greater agreement (as measured by the
sheaf structure) with the expressed opinions of its neighbors. Our first result
is that trajectories of this sheaf heat equation converge to global sections. 

\begin{theorem}
\label{thm:sheafdiffusionsoln}
  Solutions $x(t)$ to \eqref{eqn:sheafheateqn} converge as $t\to\infty$ to the
  orthogonal projection of $x(0)$ onto $H^0(G;\Fc)$. 
\end{theorem}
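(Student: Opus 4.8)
The plan is to recognize that \eqref{eqn:sheafheateqn} is a linear, constant-coefficient ODE whose long-time behavior is entirely dictated by the spectrum of $L_\Fc$, and that the self-adjointness of $L_\Fc$ forces the relevant projection to be orthogonal. First I would record the two structural properties of the Laplacian that do all the work. Symmetry is immediate, since $L_\Fc^T = (\delta^T\delta)^T = \delta^T\delta = L_\Fc$; positive semidefiniteness follows from $\langle L_\Fc x, x\rangle = \langle \delta x, \delta x\rangle = \|\delta x\|^2 \ge 0$. The spectral theorem then supplies an orthonormal basis $u_1,\dots,u_N$ of $C^0(G;\Fc)$ consisting of eigenvectors of $L_\Fc$, with real eigenvalues $\lambda_1,\dots,\lambda_N \ge 0$.

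Next I would solve the equation explicitly. Since $L_\Fc$ is a fixed linear operator, the unique solution with initial data $x(0)$ is the matrix exponential $x(t) = e^{-\alpha t L_\Fc}\,x(0)$. Expanding the initial condition in the eigenbasis as $x(0) = \sum_i c_i u_i$, the exponential acts diagonally, giving
\begin{equation}
  x(t) = \sum_{i=1}^{N} c_i\, e^{-\alpha \lambda_i t}\, u_i .
\end{equation}
Because every $\lambda_i \ge 0$ and $\alpha > 0$, the scalar factor $e^{-\alpha \lambda_i t}$ tends to $0$ as $t\to\infty$ whenever $\lambda_i > 0$, and equals $1$ for all $t$ whenever $\lambda_i = 0$. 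Hence $x(t) \to \sum_{i:\,\lambda_i = 0} c_i u_i$.

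It remains to identify this limit. The span of the eigenvectors with $\lambda_i = 0$ is exactly $\ker L_\Fc$, and since $\{u_i\}$ is orthonormal, the coefficients $c_i = \langle x(0), u_i\rangle$ are precisely the components of $x(0)$ along the $u_i$; thus the map $x(0) \mapsto \sum_{\lambda_i = 0} c_i u_i$ is the orthogonal projection onto $\ker L_\Fc$. Finally, the Hodge Theorem identifies $\ker L_\Fc = H^0(G;\Fc)$, which gives the claimed limit. I do not expect a genuine obstacle here, as the computation is routine; the one point demanding care is that the limiting projection is \emph{orthogonal} rather than merely a linear projection onto the kernel. This hinges entirely on the self-adjointness of $L_\Fc$, which guarantees an orthonormal eigenbasis and hence that the kernel and its orthogonal complement (the span of the positive-eigenvalue eigenvectors) are spectrally separated. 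Were $L_\Fc$ not symmetric—as may occur in the evolving-expression or antagonistic settings of later sections—the surviving components would be an oblique projection, and one would additionally need to rule out non-diagonalizable (Jordan) behavior before concluding convergence.
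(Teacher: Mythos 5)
Your proposal is correct and follows essentially the same route as the paper: symmetry and positive semidefiniteness of $L_\Fc = \delta^T\delta$, the explicit solution $x(t) = e^{-\alpha t L_\Fc}x(0)$, diagonalization to see that the limit is the orthogonal projection onto $\ker L_\Fc$, and the Hodge Theorem to identify the kernel with $H^0(G;\Fc)$. Your version simply spells out the eigenbasis expansion that the paper's proof leaves implicit.
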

\begin{proof}
  	The sheaf Laplacian $L_\Fc$ is symmetric and positive semidefinite, and hence 
	is diagonalizable with all eigenvalues nonnegative. The solution to 
	\eqref{eqn:sheafheateqn} is
  \[
  	x(t) = \exp(-t\alpha L_\Fc)x(0).
  \]	 
  	This solution operator has limit $\lim_{t\to\infty}\exp(-t\alpha L_\Fc)$ equal to zero
  	except on a block-diagonal identity submatrix corresponding to the zero eigenvalues.
	This is orthogonal projection onto $\ker L_\Fc = H^0(G;\Fc)$.
\end{proof}

Theorem~\ref{thm:sheafdiffusionsoln} has several consequences. One is that the only 
stable opinion distributions are global sections of the discourse sheaf. If this sheaf
has no nontrivial global sections, the only stable opinions will be everywhere zero: an 
uninteresting solution. Further, opinions converge exponentially to a
stable distribution, with rate of convergence related to the spectral properties
of the sheaf Laplacian \cite{hansen_toward_2019}.

The corresponding result for the graph Laplacian-based dynamics is that opinions
converge toward the average of the initial opinion distribution. Discrete-time
local averaging dynamics (with appropriately connected graphs) display a bit more
flexibility, but admit only a limiting distribution that is a weighted average of the
initial state. With sheaf Laplacian dynamics, new stable distributions are
possible.

\begin{ex} {\em (In polite company)}
  The simple sheaf shown in Figure~\ref{fig:polarizedsheaf} has all stalks of dimension
  one and all restriction maps of full rank: each person [vertex] has a private opinion
  about a topic and expresses that opinion. For the sake of illustration, assume that
  each stalk (vertex and edge) has the same basis topic --- 
  say, opinion about a certain politician. If this were a constant sheaf, a global section would
  represent consensus with identical opinions. However, as illustrated, the two agents on
  the left have a positive personal opinion, whereas the two on the right have a negative 
  personal opinion. The restriction maps encode expression of that opinion. Note how the
  discourse sheaf as illustrated permits selective expression of opinion. The two agents
  on the right tell a polite lie to their neighbors on the left but are frank with each other. 
  A global section of this sheaf maintains the public agreement. 

  One notes that although each agent {\em knows} the veracity of their opinion expressions,
  they do not know the veracity of their neighbors' expressed opinions. 
  Note also that, given this structure on the discourse sheaf, any
  initial opinion distribution will converge to one of these polarized
  distributions by Theorem~\ref{thm:sheafdiffusionsoln}. Compare the results
  of~\cite{altafini_dynamics_2012,altafini_consensus_2013}, which studied
  similar structures implicitly.
\end{ex}

% ********************************************************************************************
\begin{figure}
\label{fig:polarizedsheaf}
  \centering
  \includegraphics[width=5.0in]{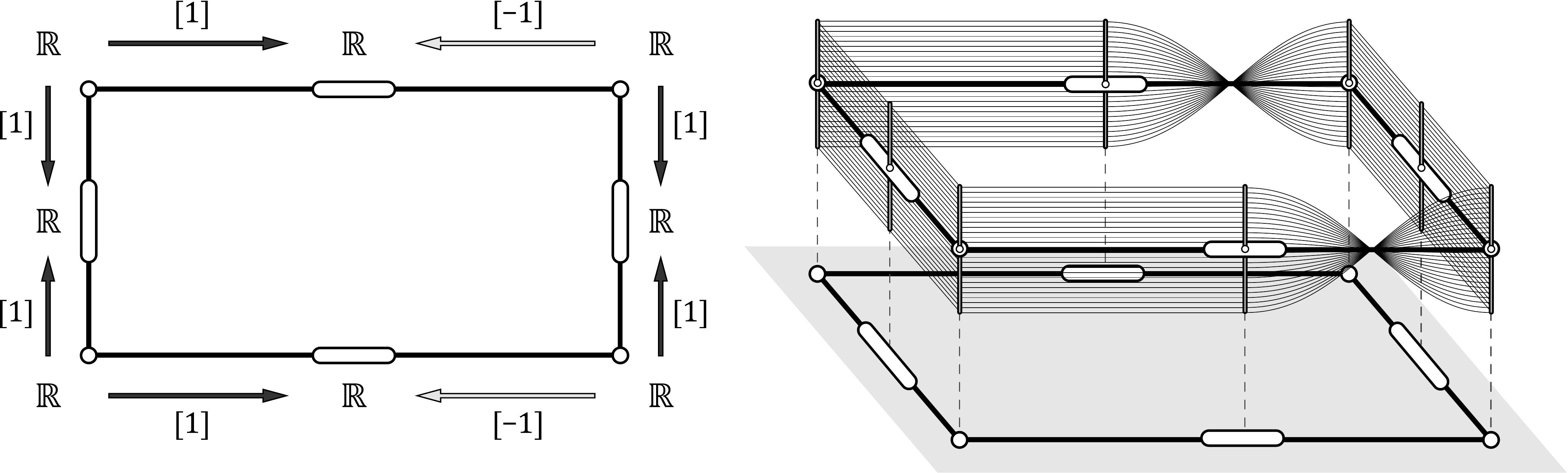}
  \caption{A sheaf supporting stable polarized opinion distributions as global sections. The two agents on the
  right lie to their neighbors on the left, but are truthful to each other.}
\end{figure}
% ********************************************************************************************

% ============================================================================================
\section{Stubbornness and Harmonic Extension}
\label{sec:HarmonicExtension}
% ============================================================================================
Consider a slight variation on the diffusion equation~\eqref{eqn:sheafheateqn} in which 
some agents are stubborn: they do not change their opinions in
response to communication with their neighbors. What consequence does this have
for the long-run dynamics? To answer this question, we consider the problem of
\introduce{harmonic extension} for partially-defined cochains on a sheaf. Our
results here and in the next two sections are extensions of ideas originally
introduced by Taylor~\cite{taylor_towards_1968} to the setting of discourse sheaves.

Let $U\subset V$ be a subset of vertices of $G$, and let $u\in C^0(U;\Fc)$ be a 0-cochain 
with support in $U$ --- a choice of a data $u_v \in \Fc(v)$ for each $v \in U$. A
harmonic extension of $u$ to the rest of the graph is a 0-cochain $x \in
C^0(G;\Fc)$ such that $x|_U = u$ and $(L_\Fc x)_v = 0 $ for every $v \in V
\setminus U$. Harmonic extensions always exist; when $H^0(G,U;\Fc) = 0$, they
are unique (see \cite[Proposition 4.1]{hansen_toward_2019} for a proof).

\begin{theorem}
	\label{thm:stubborndynamics}
  The $U$-restricted dynamics
  \begin{equation}
  \left.
  	\frac{dx}{dt}
  \right|_v 
  = 
  \left\{
  \begin{array}{cl}
  -\alpha (L_\Fc x)_v & \colon v\notin U \\
  0 & v\in U
  \end{array}
  \right.
  \end{equation}  
  on an initial condition $x_0$ converges exponentially to the 
  harmonic extension of $u=(x_0)|_U$ nearest to $x_0$.
\end{theorem}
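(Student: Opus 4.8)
The plan is to reduce the restricted flow to an affine-linear ODE on the non-stubborn vertices and then read off its limit. First I would record that the dynamics freeze the stubborn block: since $dx/dt|_v = 0$ for $v \in U$, we have $x(t)|_U = x_0|_U = u$ for all $t$. Writing $W = V \setminus U$ and decomposing $C^0(G;\Fc) = C^0(U;\Fc) \oplus C^0(W;\Fc)$, I would block-decompose the Laplacian as
\[
  L_\Fc = \begin{pmatrix} L_{UU} & L_{UW} \\ L_{WU} & L_{WW}\end{pmatrix},
\]
so that the surviving dynamics on $x_W := x|_W$ become the affine-linear system $\dot x_W = -\alpha(L_{WW} x_W + L_{WU} u)$.

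The key structural observations concern $L_{WW}$. As the compression of the symmetric positive semidefinite operator $L_\Fc$ to the coordinate subspace $C^0(W;\Fc)$, it is itself symmetric and positive semidefinite. I would identify its kernel with relative cohomology: for $x_W \in C^0(W;\Fc)$, writing $\tilde x_W$ for its extension by zero on $U$, positive semidefiniteness gives $L_{WW} x_W = 0 \iff \ip{\tilde x_W, L_\Fc \tilde x_W} = \|\delta \tilde x_W\|^2 = 0$, so $\ker L_{WW}$ is exactly the space of global sections vanishing on $U$, namely $H^0(G,U;\Fc)$. The equilibria of the affine system are precisely the cochains with $(L_\Fc x)_W = 0$, i.e.\ the harmonic extensions of $u$; these form an affine subspace parallel to $\ker L_{WW}$, and they exist by the harmonic extension result cited above.

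For convergence I would fix any harmonic extension $x_W^\ast$ and substitute $y = x_W - x_W^\ast$, turning the affine system into the homogeneous diffusion $\dot y = -\alpha L_{WW} y$. Exactly as in Theorem~\ref{thm:sheafdiffusionsoln}, $y(t) = \exp(-\alpha t L_{WW}) y(0)$ converges exponentially to the orthogonal projection of $y(0)$ onto $\ker L_{WW}$, the rate being the smallest nonzero eigenvalue of $\alpha L_{WW}$. Hence $x_W(t) \to x_W^\ast + P_{\ker L_{WW}}(x_W(0) - x_W^\ast)$, and pairing this with the frozen block $u$ gives the limiting cochain.

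The final and most delicate step is to recognize this limit as the \emph{nearest} harmonic extension. By construction the limit is the orthogonal projection of the initial state $x_W(0)$ onto the affine space $x_W^\ast + \ker L_{WW}$ of harmonic extensions. Because every harmonic extension agrees with $x_0$ on $U$ (all share the block $u$), the distance from $x_0$ to such an extension equals the distance measured on the $W$-block alone; minimizing $\|x - x_0\|$ over harmonic extensions therefore coincides with this orthogonal projection, so the limit is the harmonic extension nearest to $x_0$. I expect the main obstacle to be precisely this identification — confirming that projection onto $\ker L_{WW}$ selects the nearest extension rather than merely some extension — which hinges on the observation that the stubborn block contributes nothing to the distance.
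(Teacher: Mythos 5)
Your proposal is correct and follows essentially the same route as the paper: restrict to the non-stubborn block, observe that $L_\Fc[W,W]$ is positive semidefinite with kernel $H^0(G,U;\Fc)$, and identify the limit as the orthogonal projection of the initial state onto the affine space of harmonic extensions (the paper solves the inhomogeneous ODE by variation of constants with a pseudoinverse where you homogenize by subtracting a particular equilibrium, but this is a cosmetic difference). If anything, your final step justifying that this projection is the harmonic extension \emph{nearest to $x_0$} — because the frozen $U$-block contributes nothing to the distance — is spelled out more explicitly than in the paper's own proof.
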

\begin{proof}
  As $u=x|_U$ is constant, we can rewrite the dynamics as acting
  purely on $y=x|_Y$ for $Y = V\setminus U$. Using $L_\Fc[\cdot,\cdot]$ to
  denote the block submatrix restricted to the indicated vertex sets, we can
  express the dynamics on $y(t)$ as 
  \begin{equation}
    \frac{dy}{dt} = -\alpha(L_\Fc[Y,Y] y + L_{\Fc}[Y,U] u).
  \end{equation}
  The fixed points of this dynamical system are the 0-cochains $x$ where $L_\Fc
  x = 0$ on $Y$ and $u = (x_0)|_U$ --- the harmonic extensions of $u$. 
  As $L_\Fc[Y,Y]$ is a principal submatrix of a positive
  semidefinite matrix, it is positive semidefinite; if $H^0(G,Y;\Fc) = 0$, 
  it is positive definite. Further, $\im L_\Fc[Y,U] \subseteq \im
  L_\Fc[Y,Y] \perp \ker L_\Fc[Y,Y]$, so $\frac{dy}{dt}$ is always orthogonal to
  $\ker L_\Fc[Y,Y]$, and hence the dynamics preserve $\ker L_\Fc[Y,Y]$.
  Therefore, without loss of generality we can consider the system restricted
  to $\im L_\Fc[Y,Y]$. That is, write $y = y^\perp + y^\parallel$, where
  $y^\parallel \in \im L_\Fc[Y,Y]$ and $y^\perp \in \ker L_\Fc[Y,Y]$.  

  We now apply the general solution to an inhomogeneous linear ODE for
  $x^\parallel$ to obtain
  \begin{align*}
    y(t) &= e^{-t\alpha L_\Fc[Y,Y]}y_0^\parallel - \int_0^t e^{-(t-\tau)\alpha L_\Fc[Y,Y]} \alpha L_\Fc[Y,U]u d\tau \\
    &= e^{-t\alpha L_\Fc[Y,Y]} y_0^\parallel - \alpha^{-1}L_\Fc[Y,Y]^{\dagger}(I-e^{-\alpha L_\Fc[Y,Y]t})\alpha
    L_\Fc[Y,U]u \\
    &= e^{-t\alpha L_\Fc[Y,Y]} y_0^\parallel - L_\Fc[Y,Y]^{\dagger}(I - e^{-\alpha L_\Fc[Y,Y]t}) L_\Fc[Y,U] u.
  \end{align*}
  Here $L_\Fc[Y,Y]^\dagger$ is the Moore-Penrose pseudoinverse of $L_\Fc[Y,Y]$,
  which when restricted to $\im L_\Fc[Y,Y]$ is simply the inverse.
  As $t \to \infty$, this expression converges to
  \begin{equation}
    y^\parallel_\infty = -L_\Fc[Y,Y]^{\dagger} L_\Fc[Y,U] u.
  \end{equation}
  This is the minimum-norm harmonic extension of $u$, since any harmonic
  extension satisfies $L_\Fc[Y,Y]y + L_\Fc[Y,U]u = 0$. Since
  $y^\perp_0$ is unchanged throughout, the limit is therefore $y_\infty = y^\perp_0
  + y^\parallel_\infty$, which clearly still satisfies the equation for harmonic
  extension of $u$. 
\end{proof}

This result quantifies how even a few stubborn individuals can have an influence
on the opinion distribution throughout an entire social network. The opinion
distribution is kept in constant tension induced by the stubborn individuals. We
can characterize this equilibrium as the configuration minimizing total
disagreement given the stubborn agents' opinions. That is, the harmonic
extension is a minimizer of $\norm{\delta x}^2$ subject to $x|_U = u$.
If there is a global section $x$ with $x|_U = u$, it is clearly a harmonic extension
of $u$. However, harmonic extensions are not in general global sections.

% =========================================================================================
\section{Controlling Opinions}
\label{sec:Control}
% =========================================================================================
The analysis of stubborn individuals prompts the notion of more intentional control
of opinions over a social network. 
By way of merging the notation of the previous section with that of linear controls, let
$U\subset V$ be a set of {\em user input} vertices on which the controller can effect influence, and let $Y\subset V$
be a set of {\em observables} whose preferences one wants to control. We will denote by 
$u(t)\in C^0(U;\Fc)$ and $y(t)\in C^0(Y;\Fc)$ data supported on $U$ and $Y$ respectively
taking values in the stalks of $\Fc$ on those respective vertex sets.

The influence of user inputs on the system is mediated through a linear transformation 
$B:C^0(G;\Fc)\to C^0(G;\Fc)$ with image and coimage $C^0(U;\Fc)$; the observables are viewed through
$C:C^0(G;\Fc)\to C^0(G;\Fc)$ with image and coimage $C^0(Y;\Fc)$. The resulting linear control system is:
\begin{equation}
\label{eqn:sheafdiffusioncontrol}
  \frac{dx}{dt} = -\alpha L_{\Fc} x + B u 
  \quad : \quad
  \frac{dy}{dt} = Cx .   
\end{equation}

Controllability of \eqref{eqn:sheafdiffusioncontrol} answers the natural question
of whether opinion distributions on $Y$ can be determined via manipulation of
inputs on $U$. The system will naturally settle on a stable opinion distribution 
--- a global section of $\Fc$. Do inputs exist that will steer the system to an arbitrary
global section?

Consider the simplified case where $B$ is the identity map on  
$C^0(U;\Fc)$ and zero elsewhere. In this case, we have the following result:
\begin{theorem}
\label{thm:stabilizability}
  If the relative cohomology $H^0(G,U;\Fc)=0$, then the system~\eqref{eqn:sheafdiffusioncontrol}
  is stabilizable, with $B$ the identity on $C^0(U;\Fc)$.
\end{theorem}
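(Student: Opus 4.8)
The plan is to verify stabilizability through the Popov--Belevitch--Hautus (PBH) eigenvector criterion and then recognize its hypothesis as the vanishing of a relative cohomology group. First I would record the spectral structure of the drift operator. Since $L_\Fc = \delta^T\delta$ is symmetric positive semidefinite, the matrix $A := -\alpha L_\Fc$ is symmetric negative semidefinite, so all of its eigenvalues satisfy $\lambda \le 0$; the only eigenvalues with $\mathrm{Re}(\lambda)\ge 0$ are thus the zero eigenvalues, whose eigenspace is $\ker L_\Fc = H^0(G;\Fc)$ by the Hodge Theorem. Every other mode already decays exponentially, so stabilizability reduces entirely to whether the zero modes can be steered.

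Next I would pin down the algebra of the input map. As $B$ is the identity on $C^0(U;\Fc)$ and zero on its complement, it is exactly the orthogonal projection onto $C^0(U;\Fc)\subset C^0(G;\Fc)$; in particular $B$ is symmetric with $\ker B = C^0(G,U;\Fc)$, the cochains supported away from $U$. The PBH criterion states that $(A,B)$ is stabilizable unless there is a nonzero $w$ with $A w = \lambda w$ for some $\mathrm{Re}(\lambda)\ge 0$ and $B^T w = 0$. Because $A$ is symmetric, its left and right eigenvectors coincide and $\lambda$ is real, so the obstructing vectors are precisely the nonzero $w$ with $L_\Fc w = 0$ (that is, $w\in H^0(G;\Fc)$) and $B w = 0$ (that is, $w\in\ker B = C^0(G,U;\Fc)$).

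The remainder is homological. An obstructing vector $w$ is exactly a nonzero global section vanishing on $U$, and by the description of relative cohomology in \S\ref{sec:Cohomology} these form precisely $H^0(G;\Fc)\cap C^0(G,U;\Fc) = H^0(G,U;\Fc)$. Hence the hypothesis $H^0(G,U;\Fc)=0$ asserts exactly that no obstructing vector exists, the PBH rank condition holds at every $\lambda$ with $\mathrm{Re}(\lambda)\ge 0$, and the system is stabilizable.

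The step I expect to demand the most care is the identification $H^0(G,U;\Fc) = H^0(G;\Fc)\cap C^0(G,U;\Fc)$. One must check that for a cochain $w$ supported on $V\setminus U$, the relative coboundary (which only tests edges not lying inside $U$) vanishes if and only if the full coboundary $\delta w$ does: any edge with both endpoints in $U$ contributes zero automatically once $w|_U = 0$, so restricting $\delta$ discards no constraints. I would also flag, though it is immediate from the orthonormal-basis convention of \S\ref{sec:CellularSheaves}, that $C^0(U;\Fc)$ and $C^0(G,U;\Fc)$ are genuine orthogonal complements, which is what lets me read off $\ker B$ and $\im L_\Fc = (\ker L_\Fc)^\perp$ from the inner-product structure.
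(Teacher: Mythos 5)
Your proof is correct and follows essentially the same route as the paper: both apply the Hautus/PBH stabilizability test, use symmetry and negative semidefiniteness of $-\alpha L_\Fc$ to reduce to the eigenvalue $\lambda=0$, and identify the obstructing vectors as nonzero global sections vanishing on $U$, i.e.\ elements of $H^0(G,U;\Fc)$. The only cosmetic difference is that you use the eigenvector form of the PBH criterion where the paper uses the equivalent rank form, and you spell out the identification $H^0(G,U;\Fc)=H^0(G;\Fc)\cap C^0(G,U;\Fc)$ which the paper leaves implicit.
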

\begin{proof}
  Stabilizability is equivalent to the condition that the matrix
  \[
  	\begin{bmatrix} 
  		(-\alpha L_\Fc - \lambda I) &B 
  	\end{bmatrix}
  \]
  be full rank for all $\lambda$ with nonnegative real part (see, e.g.,
  \cite{sontag_mathematical_1998} for a deeper discussion of this and other
  standard results of linear control theory). Since the
  eigenvalues of $-\alpha L_{\Fc}$ are real and nonpositive, $-\alpha L_\Fc -
  \lambda I$ is already full rank for any $\lambda$ with nonzero imaginary part
  or negative real part. Thus we only need to consider $\lambda = 0$, the case
  of the matrix $\begin{bmatrix}-\alpha L_\Fc &B\end{bmatrix}$. This matrix has
  full rank if for every $x \in \ker L_\Fc$, there exists some $u \in \im B$
  with $x^T u \neq 0$. In particular, this will be satisfied if no nontrivial global section of
  $\Fc$ vanishes on $U$. From the definition and interpretation of relative cohomology, 
  this is precisely the condition that $H^0(G,U;\Fc) = 0$. 
\end{proof}

This theorem implies that given an appropriate input set, we can ensure that the
dynamics converge to any global section of $\Fc$.
When $H^0(G,U;\Fc) = 0$, every global section is the unique harmonic
extension of its restriction to $U$, so we need only control
$u=x|_U$ to the desired states, and the rest of the network will
follow.

The dual result to Theorem~\ref{thm:stabilizability} (presented without proof) is
\begin{theorem}
\label{thm:controlability}
  If the relative cohomology $H^0(G,Y;\Fc)=0$, then the
  system~\eqref{eqn:sheafdiffusioncontrol} is detectable
  for $C$ the identity on $C^0(Y;\Fc)$ and zero elsewhere.
\end{theorem}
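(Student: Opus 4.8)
The plan is to exploit the fact that this is the exact dual of Theorem~\ref{thm:stabilizability} and reduce it to that result by a symmetry argument, so that essentially no new work is required. Recall the standard duality of linear systems theory: a pair $(A,C)$ is detectable if and only if the transposed pair $(A^T,C^T)$ is stabilizable. In the system~\eqref{eqn:sheafdiffusioncontrol} we have $A = -\alpha L_\Fc$, and the sheaf Laplacian is symmetric (Definition~\ref{def:Laplacian}), so $A^T = A$. Likewise $C$ is the orthogonal projection onto $C^0(Y;\Fc)$, whence $C^T = C$. Therefore detectability of $(-\alpha L_\Fc, C)$ is equivalent to stabilizability of the identical pair $(-\alpha L_\Fc, C)$, which is precisely Theorem~\ref{thm:stabilizability} with the input set $U$ and matrix $B$ replaced by the observable set $Y$ and matrix $C$. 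Since the hypothesis supplies $H^0(G,Y;\Fc)=0$, that theorem yields the conclusion immediately.

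For a self-contained proof I would instead run the PBH (Popov-Belevitch-Hautus) test for detectability directly, transposing the argument already given for stabilizability. Detectability is equivalent to the matrix
\[
\begin{bmatrix} -\alpha L_\Fc - \lambda I \\ C \end{bmatrix}
\]
having full column rank for every $\lambda$ with nonnegative real part. Because the eigenvalues of $-\alpha L_\Fc$ are real and nonpositive, the only such $\lambda$ that can be an eigenvalue is $\lambda = 0$; for all others the top block $-\alpha L_\Fc - \lambda I$ is already injective, so only $\lambda = 0$ requires checking. At $\lambda = 0$ the matrix fails to be injective exactly when some nonzero $x$ lies in $\ker L_\Fc \cap \ker C$. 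By the Hodge Theorem $\ker L_\Fc = H^0(G;\Fc)$, and $\ker C$ consists of the cochains vanishing on $Y$, so such an $x$ is precisely a nonzero global section of $\Fc$ supported off $Y$, i.e. a nonzero element of $H^0(G,Y;\Fc)$. Hence the hypothesis $H^0(G,Y;\Fc)=0$ forces full column rank, and detectability follows.

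There is no genuine obstacle here, since the proof is a verbatim transpose of the stabilizability argument; the only point demanding care is verifying the two symmetry facts $L_\Fc^T = L_\Fc$ and $C^T = C$, which are what make the dual system coincide with the primal one so that Theorem~\ref{thm:stabilizability} transfers without modification. Once these are in hand, the relative cohomology $H^0(G,Y;\Fc)$ plays exactly the role that $H^0(G,U;\Fc)$ played before, and the kernel characterization of relative cohomology carries over directly.
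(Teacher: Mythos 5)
Your proposal is correct and is exactly the argument the paper intends: Theorem~\ref{thm:controlability} is stated as the dual of Theorem~\ref{thm:stabilizability} and left unproved precisely because the transposed PBH test, together with the symmetry of $L_\Fc$ and of the projection $C$, reduces it to the stabilizability argument verbatim. Both your duality reduction and your self-contained PBH computation (identifying $\ker L_\Fc \cap \ker C$ with global sections vanishing on $Y$, i.e.\ $H^0(G,Y;\Fc)$) match the paper's reasoning for the primal statement, so there is nothing to add.
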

Thus, by observing agents on $Y$ we can educe any motion of the global state
projected to $H^0(G;\Fc)$. 
Given both conditions we can construct an observer-controller pair steering 
the system to a global section with any desired outcome as measured on $Y$.

\begin{ex}
  If $G$ is connected and the communication structure is given by the
  constant sheaf $\underline{\R^n}$, any vertex gives an input set for
  which the dynamics are stabilizable. This is because
  there are no nonzero constant $\R^n$-valued
  functions on $G$ that vanish at a vertex, so $H^0(G,\{v\};\underline{\R^n}) =
  0$. In terms of opinion dynamics, it is only necessary to have arbitrary
  influence on a single individual in order to ensure eventual global consensus
  on any given opinion --- a trivial property of the constant sheaf.
\end{ex}

% =========================================================================================
\section{Weighted Reluctance}
\label{sec:Reluctance}
% =========================================================================================

The control perspective is helpful in analyzing another type of linear dynamics,
where individuals are resistant to modifying their initial opinions (but not
infinitely so, as in Section~\ref{sec:HarmonicExtension}). We model this as a
feedback controller attached to the original system,
letting $u_v = \alpha\gamma_v((x_0)_v-x_v)$, where $\gamma_v$ is an
agent-dependent reluctance parameter. The long-run opinion distribution is again 
a function of harmonic extension. This can be effected by expanding the sheaf to 
an augmented graph $G'$ as follows (see Figure~\ref{fig:reluctance}). 

{\bf Construction:} Given $G$, augment it to a graph $G'$ by duplicating the
vertex set $V$ to a copy $V'$. Attach each $v'\in V'$ to the corresponding vertex
$v \in G$, with a single edge $e'$. Extend the discourse sheaf
$\Fc$ to a sheaf $\Fc'$ on $G'$ by letting $\Fc'(v') = \Fc'(e') = \Fc(v)$, where
$e'$ is the edge between $v'$ and $v$. The restriction maps are $\Fc'_{v \face e'}
= \Fc'_{v' \face e'} = \sqrt{\gamma_v} I$. One thinks of this augmented graph as
giving each agent an additional acquaintance, their {\em parent}, who acts as
a constant influence on their opinions.

% ********************************************************************************************
\begin{figure}
\label{fig:reluctance}
  \centering
  \includegraphics[width=4.0in]{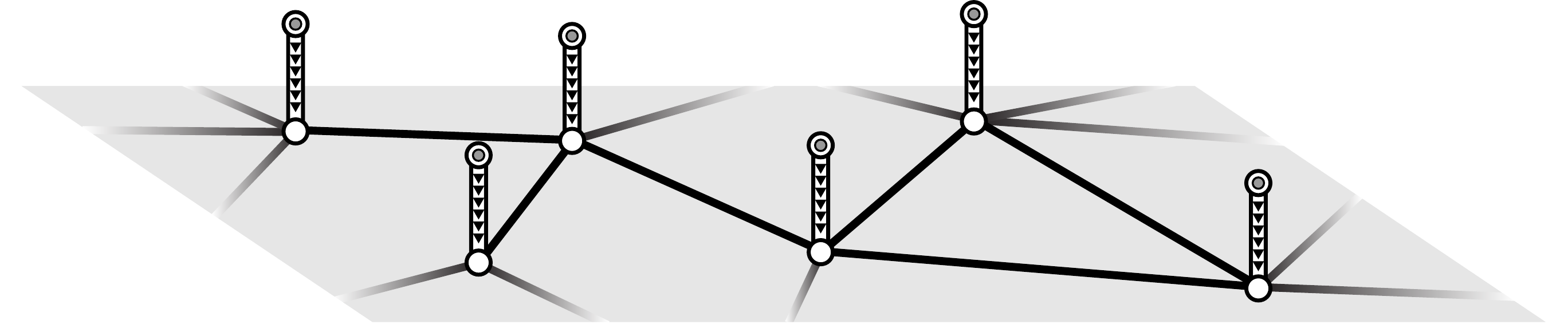}
  \caption{Weighted reluctance to changing opinions may be modeled by extending the 
  discourse sheaf over an augmented graph, giving each vertex a {\em stubborn parent}
  who exerts influence.}
\end{figure}
% ********************************************************************************************

We now apply the results about dynamics with stubborn agents to $\Fc'$ on $G'$, with 
parents as stubborn agents. 
For an initial condition $x_0\in C^0(G;\Fc)$, extend it to $G'$ via $x_0(v') = x_0(v)$ and
force all the agents $v'$ to be stubborn (in the sense of \S\ref{sec:HarmonicExtension}). 
The differential equation governing the opinion evolution is
\begin{equation}
\begin{split}
  \frac{dx_v}{dt} &= -\alpha(L_{\Fc'}x)_v \\
  &= -\alpha((L_{\Fc} x)_v + \gamma_v(x_v - x_{v'})) \\
  &= -\alpha(L_\Fc x)_v + \alpha\gamma_v((x_0)_v - x_v).
\end{split}
\end{equation}
Applying Theorem~\ref{thm:stubborndynamics}, we see that the opinion
distribution converges to the harmonic extension of $(x_0)|_{V'}$ to the rest of
$\Fc'$. The stubbornness parameters $\gamma_v$ influence how much the initial
opinions influence the limit, and hence how far from a global section of $\Fc$ the
limiting opinion distribution lies. 

% =========================================================================================
\section{Learning to Lie}
\label{sec:EvolvingExpression}
% =========================================================================================
The notion that communication over a social network leads to changes in opinions
is a convenient idealization to which diffusion dynamics applies. Because using a 
discourse sheaf to model opinion dynamics makes explicit the communication structure 
employed by the agents, it allow us to model changes in that structure. Instead of 
opinions changing over time, one could just as well consider evolution of expression:
agents can learn to communicate differently based on the reactions of their neighbors.
Leaving aside the sociological questions of whether a typical person 
in the face of opposition actually changes opinions or simply ``learns to communicate better,'' 
we demonstrate the flexibility of the discourse sheaf model under such settings.

Assume that each agent $v$ is able to modify all its restriction maps $\Fc_{v\face e}$
(for edges $e$ incident to $v$) and is able to observe their neighbors' translated
opinions $\Fc_{u \face e} x_u$. If the goal of each agent is to learn how to
translate their opinions so that apparent consensus is reached, they should
alter their restriction maps to remove the part of the image that contributes to
disagreement with neighbors. That is, the dynamics should be of the form
\begin{equation}\label{eqn:structuraldiffusion}
  \frac{d}{dt}{\Fc}_{v \face e} = -\beta (\Fc_{v \face e} x_v - \Fc_{u \face e}
  x_u)x_v^T ,
\end{equation}
for some diffusion strength $\beta>0$.
These dynamics may be nicely expressed in terms of the block rows $\delta_e$ of
the coboundary matrix corresponding to each edge:
\begin{equation}\label{eqn:edgediffusion}
  \frac{d\delta_e}{dt} = - \beta\delta_e x_e x_e^T,
\end{equation}
where $x_e$ is the vector in $C^0(G;\Fc)$ in which all entries corresponding to
vertices not incident to $e$ have been replaced with zero.
Combining these together, we have
\begin{equation}
  \frac{d\delta}{dt} = - \beta P_\delta(\delta x x^T),
\end{equation}
where $P_\delta$ is the map that takes the matrix of a linear transformation $C^0(G;\Fc) \to
C^1(G;\Fc)$ and projects it to a matrix with the correct sparsity pattern to be
a sheaf coboundary matrix. That is, $P_\delta$ sets all entries for blocks
corresponding to non-incident vertex-edge pairs to zero.

\begin{theorem}\label{thm:structuraldiffusion}
  For $\Fc(t)$ a solution to~\eqref{eqn:structuraldiffusion} on the space of sheaves over $G$ with fixed
  stalks, the sheaf $\Fc=\Fc(0)$ converges to $\Fc'=\lim_{t\to\infty}\Fc(t)$, the nearest 
  sheaf such that $x$ is a global section, where distance between $\Fc$ and $\Fc'$ is 
  measured by the squared Frobenius norm:
	\begin{equation}\label{eqn:sheafdistance}
	d(\Fc,\Fc') = \sum_{v \face e} \norm{\Fc_{v \face e}-\Fc'_{v\face e}}_F^2 = \norm{\delta - \delta'}_F^2.
	\end{equation}
\end{theorem}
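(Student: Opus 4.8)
The plan is to recognize the dynamics as a linear gradient flow on the space of admissible coboundary matrices and then invoke the same spectral argument that proved Theorem~\ref{thm:sheafdiffusionsoln}. First I would fix the opinion cochain $x$ and treat the entire collection of restriction maps as a single variable, namely the coboundary matrix $\delta$, constrained to lie in the linear subspace $S$ of matrices carrying the correct block-sparsity pattern for a coboundary on $G$ (those supported only on incident vertex--edge blocks). By construction $P_\delta$ is exactly the orthogonal projection of the ambient matrix space onto $S$ with respect to the Frobenius inner product, so $\frac{d\delta}{dt} = -\beta P_\delta(\delta x x^T)$ is a flow intrinsic to $S$. The key translation is that $x$ is a global section of a sheaf $\Fc'$ precisely when $\delta' x = 0$ (by the definition of $H^0$ and the Hodge theorem), so the target set is the linear subspace $M = \{\delta' \in S : \delta' x = 0\}$, i.e.\ the admissible coboundaries annihilating $x$. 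Note $M$ is nonempty, since the zero coboundary lies in it, so the Frobenius-nearest point of $M$ is well defined and unique.

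Next I would identify the right-hand side as the intrinsic gradient of the convex quadratic energy $E(\delta) = \tfrac12\norm{\delta x}^2$ restricted to $S$. A one-line matrix-calculus computation gives the ambient gradient $\delta x x^T$, and projecting onto the admissible directions with $P_\delta$ yields exactly the stated vector field; thus the flow is $-\beta$ times the gradient of $E$ on $S$. Writing $Q\delta := P_\delta(\delta x x^T)$, the crucial algebraic step is to verify, using that $P_\delta$ is the orthogonal projection onto $S$ and that $\delta_1 \in S$ already, the identity $\langle \delta_1, Q\delta_2\rangle_F = \langle \delta_1 x, \delta_2 x\rangle$ for all $\delta_1,\delta_2 \in S$ (the projection is absorbed into the first argument because $P_\delta\delta_1 = \delta_1$). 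This exhibits $Q$ as a self-adjoint, positive semidefinite operator on $S$, and since $\langle \delta, Q\delta\rangle_F = \norm{\delta x}^2$, its kernel is precisely $M$.

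With $Q$ self-adjoint and positive semidefinite, the dynamics reduce to the linear system $\dot\delta = -\beta Q\delta$ on $S$, with solution $\delta(t) = e^{-\beta t Q}\delta(0)$. The same spectral argument as in Theorem~\ref{thm:sheafdiffusionsoln} then shows $e^{-\beta t Q}$ converges as $t\to\infty$ to the orthogonal projection onto $\ker Q = M$; hence $\delta(t) \to P_M\delta(0)$, the Frobenius-orthogonal projection of the initial coboundary onto $M$. By definition of orthogonal projection onto a subspace, $P_M\delta(0)$ is the element of $M$ minimizing $\norm{\delta - \delta'}_F^2$, which is exactly the nearest sheaf $\Fc'$ (with the same stalks) for which $x$ is a global section, as claimed in~\eqref{eqn:sheafdistance}.

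I expect the only delicate point to be the self-adjointness and kernel computation for $Q$: one must confirm that $P_\delta$ may be dropped from one slot of the Frobenius pairing precisely because the other slot lies in $S$, and that the quadratic form $\langle \delta, Q\delta\rangle_F$ equals $\norm{\delta x}^2$ so that $\ker Q$ coincides with $M$ rather than some larger set. Everything else is a direct reuse of the positive-semidefinite spectral decay already established for $L_\Fc$, transported from the operator on $C^0(G;\Fc)$ to the operator $Q$ on the space of admissible coboundaries.
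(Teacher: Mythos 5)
Your proof is correct and follows essentially the same route as the paper's: both recognize the flow as a linear, self-adjoint, positive semidefinite evolution with respect to the Frobenius inner product whose kernel is exactly the set of admissible coboundaries annihilating $x$, and both conclude via the spectral decay argument of Theorem~\ref{thm:sheafdiffusionsoln} that the trajectory converges to the orthogonal --- hence Frobenius-nearest --- projection onto that kernel. The only difference is bookkeeping: the paper first decouples the dynamics edge by edge, verifying positive semidefiniteness of $\delta_e \mapsto \delta_e x_e x_e^T$ on each block row and then recombining, whereas you work globally on the sparsity-constrained subspace and absorb $P_\delta$ into one slot of the Frobenius pairing; the two verifications are equivalent.
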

\begin{proof}
  The $0$-cochain $x$ is a global section of $\Fc'$ precisely when, for each edge
  $e = u \sim v$, $\Fc'_{u \face e} x_u = \Fc'_{v \face e}x_v$, or equivalently,
  when $\delta_e'x_e = 0$.

  The dynamics for each edge are uncoupled, as can be seen from the form of the
  equation in~\eqref{eqn:edgediffusion}. The differential equation for each edge
  is linear, given by the operator $A$ taking $\delta_e$ to $\delta_e x_e
  x_e^T$. This operator is positive semidefinite with respect to the inner
  product $\ip{\delta_e,\delta_e'} = \tr(\delta_e^T\delta_e')$, and its kernel
  is given by those $\delta_e$ for which $\delta_e x_e = 0$. Therefore, the same
  argument as in the proof of Theorem~\ref{thm:sheafdiffusionsoln} shows that
  the trajectory of $\delta_e$ converges to its orthogonal projection onto $\ker
  A$ with respect to this inner product. The corresponding norm is the Frobenius
  norm, and hence $\delta_e$ converges to the nearest $\delta_e'$ as measured by
  this norm such that $\delta_e'x_e = 0$. 

  Since the distance~\eqref{eqn:sheafdistance} decomposes across edges, and each edge
  independently satisfies this distance-minimizing property, the same is true of
  their combination into a complete coboundary matrix. That is, the limiting
  coboundary matrix $\delta' = \lim_{t \to \infty} \delta(t)$ is the minimizer
  of $\norm{\delta(0) - \delta'}_F^2$ such that $\delta'$ is a sheaf coboundary
  matrix and $\delta'x = 0$. 
\end{proof}

This proof indicates a sort of duality between the sheaf heat
equation~\eqref{eqn:sheafheateqn} and the structural
dynamics~\eqref{eqn:structuraldiffusion}. Both are diffusion-like processes,
adjusting parameters to alleviate a local discrepancy. 

\begin{ex}
  {\em (Learning to lie)}
  The restriction map diffusion dynamics can convert the constant sheaf into a
  nontrivial communication structure. Start with the constant sheaf on a graph
  with two vertices and a single edge as shown in Figure~\ref{fig:learntolie},
  with a highly inconsistent 0-cochain assigning $-4$ to one vertex and $+1$ to
  the other. As the restriction map dynamics progress, the discourse sheaf becomes an
  inconsistent one, and the sign of one restriction map changes --- the
  corresponding agent \introduce{learns to lie} about their opinion. Note that
  this agent changes their expressed opinion's sign, but also downplays its magnitude,
  allowing the original opinion distribution to become a global section.
\end{ex}

% ********************************************************************************************
\begin{figure}
\label{fig:learntolie}
  \centering
  \includegraphics[width=5.0in]{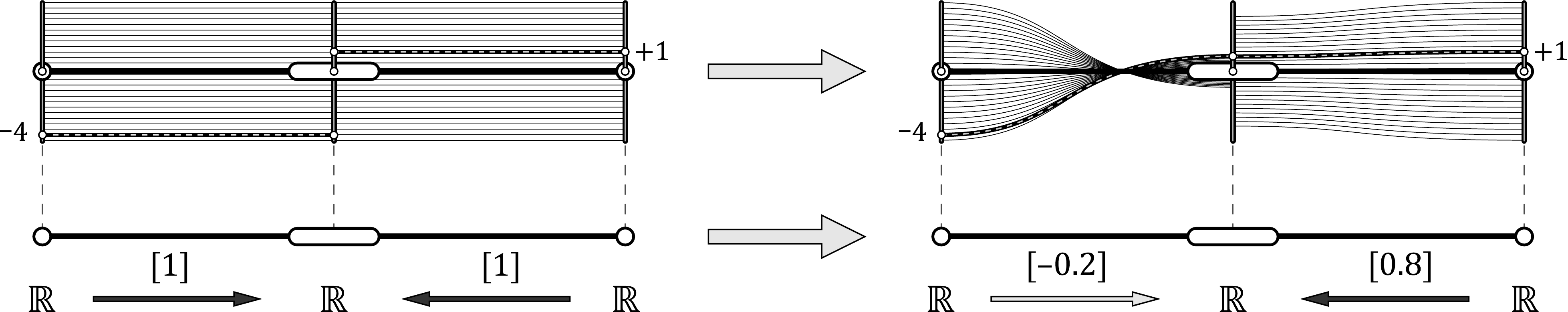}
  \caption{A constant sheaf over an edge with an initial opinion distribution that is highly 
  discordant [left] converges under diffusion of the sheaf to a nonconstant sheaf [right] in 
  which the agent with the more extreme negative opinion has ``learned to lie''
  in order to come to consensus.}
\end{figure}
% ********************************************************************************************

% =========================================================================================
\section{Joint Opinion-Expression Diffusion}
\label{sec:Joint}
% =========================================================================================
The natural culmination of this line of reasoning is to combine the opinion
diffusion~\eqref{eqn:sheafheateqn} and the communication structure
diffusion~\eqref{eqn:structuraldiffusion}. That is, evolve both the $0$-cochain
$x$ and the restriction maps $\Fc_{v \face e}$ according to
\begin{equation}
\label{eqn:combineddiffusion}
\begin{split}
  \frac{dx}{dt} &= -\alpha \delta^T\delta x \\
  \frac{d{\delta_e}}{dt} &= -\beta \delta_e x_e x_e^T.
\end{split}
\end{equation}

\begin{lemma}
For $x,\delta$ evolving according to~\eqref{eqn:combineddiffusion}, the function
	\begin{equation}
	\label{eq:lyapunov}
	  \Psi(x,\delta) = \frac{1}{2} x^T\delta^T\delta x
	\end{equation}	
satisfies $\Psi(x,\delta) \geq 0$ and $\frac{d}{dt}\Psi(x,\delta) \leq 0$, with zero
attained in both instances if and only if $\delta x = 0$. 
\end{lemma}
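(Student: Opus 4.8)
The plan is to treat the two claims separately, noting at the outset that the abbreviation $\xi = \delta x \in C^1(G;\Fc)$ turns $\Psi$ into $\frac{1}{2}\norm{\xi}^2$. Nonnegativity is then immediate, and $\Psi = 0$ holds exactly when $\xi = 0$, i.e. when $\delta x = 0$; this disposes of the first assertion. The substance lies in the sign of $\frac{d}{dt}\Psi$, which I would obtain by differentiating $\Psi = \frac{1}{2}\ip{\xi,\xi}$ and applying the product rule to $\dot\xi = \dot\delta\,x + \delta\,\dot x$, so that $\frac{d}{dt}\Psi = \ip{\xi, \dot\delta\, x} + \ip{\xi, \delta\,\dot x}$.

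For the second inner product, substituting the opinion dynamics $\dot x = -\alpha\,\delta^T\delta x = -\alpha\,\delta^T\xi$ gives $\ip{\xi,\delta\,\dot x} = -\alpha\,\xi^T\delta\delta^T\xi = -\alpha\norm{\delta^T\xi}^2$, manifestly nonpositive. For the first inner product I would work edgewise, using the block form $\dot\delta_e = -\beta\,\delta_e x_e x_e^T$. The key bookkeeping is that $\delta_e$ is supported only on the stalks of the vertices incident to $e$, so $\delta_e x = \delta_e x_e = \xi_e$, and that $x_e^T x = \norm{x_e}^2$. With these two identities, $(\dot\delta\,x)_e = -\beta\,\norm{x_e}^2\,\xi_e$, whence $\ip{\xi,\dot\delta\,x} = -\beta\sum_{e}\norm{x_e}^2\norm{\xi_e}^2 \le 0$. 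Summing the two contributions yields $\frac{d}{dt}\Psi = -\alpha\norm{\delta^T\xi}^2 - \beta\sum_e\norm{x_e}^2\norm{\xi_e}^2 \le 0$.

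It remains to characterize equality. The forward direction is trivial: $\delta x = 0$ makes both $\Psi$ and each summand of $\frac{d}{dt}\Psi$ vanish. For the converse, since $\alpha,\beta > 0$ and both contributions are nonpositive, $\frac{d}{dt}\Psi = 0$ forces each to vanish separately; in particular $\delta^T\xi = 0$. Here I would invoke the orthogonality $\ker\delta^T = (\im\delta)^\perp$: since $\xi = \delta x$ lies in $\im\delta$, the relation $\delta^T\xi = 0$ gives $\norm{\xi}^2 = \xi^T\delta x = (\delta^T\xi)^T x = 0$, so $\xi = \delta x = 0$. Thus $\frac{d}{dt}\Psi = 0$ iff $\delta x = 0$, matching the $\Psi = 0$ condition.

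The main obstacle I anticipate is purely a matter of careful bookkeeping in the edgewise term: one must confirm that truncating $x$ to $x_e$ is harmless inside $\delta_e x$ (because $\delta_e$ annihilates non-incident stalks) and that the scalar $x_e^T x$ collapses to $\norm{x_e}^2$. A secondary subtlety is recognizing that the equality characterization needs only the $\alpha$-term: the orthogonality $\im\delta \perp \ker\delta^T$ does all the work, so one need not argue separately about the vanishing of each $\norm{x_e}^2\norm{\xi_e}^2$.
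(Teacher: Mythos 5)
Your proof is correct and takes essentially the same route as the paper's: a direct differentiation of $\Psi$ via the product rule, splitting $\frac{d}{dt}\Psi$ into the nonpositive $\alpha$-term $-\alpha\norm{\delta^T\delta x}^2$ and the edgewise $\beta$-term $-\beta\sum_e \norm{\delta_e x_e}^2\norm{x_e}^2$, which matches the paper's computation exactly. Your explicit appeal to $\im\delta\perp\ker\delta^T$ to close the equality case is a detail the paper leaves as ``clearly,'' not a genuinely different argument.
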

\begin{proof}
In \eqref{eqn:combineddiffusion}, $\delta$ is considered to lie in the space
of linear transformations with the appropriate sparsity pattern to be the coboundary of 
a sheaf over $G$. That $\Psi(x,\delta) \geq 0$ is immediate from its definition, as
is its vanishing precisely when $\delta x = 0$. The proof of the second inequality is by direct computation:
\begin{equation}
\begin{split}
  \frac{d}{dt}{\Psi}(x,\delta) &=  x^T\delta^T\delta \frac{dx}{dt} +  x^T\delta^T\frac{d\delta}{dt}x
  \\
  &= - \alpha x^T \delta^T\delta\delta^T\delta x - \beta x^T \delta^T
  P_\delta(\delta x x^T)x \\
  &= - \alpha x^T (\delta^T\delta)^2 x -  \beta \sum_e x_e^T \delta_e^T
  \delta_e x_e x_e^T x_e.
\end{split}
\end{equation}
The first term is clearly nonpositive, and negative precisely when $\delta x \neq 0$,
and the second term is similarly negative when $\delta_e x_e \neq 0$ for some $e$. 
\end{proof}

Applying LaSalle's invariance principle to this function reveals that all limit
points of trajectories of~\eqref{eqn:combineddiffusion} satisfy $\delta x = 0$. 
More is true: it is easy to check that~\eqref{eqn:combineddiffusion} is actually
a gradient descent equation on $\Psi$, with the gradient defined with respect to
the inner product $\ip{(x,\delta),(x',\delta')} =
\frac{1}{\alpha} x^T x' + \frac{1}{\beta} \sum_{e}\tr(\delta_e^T\delta_e')$.
This implies that trajectories indeed converge to points with $\delta x = 0$.
That is, trajectories converge to points describing a sheaf $\Fc$ on $G$ with
coboundary map $\delta$ together with a global section $x$ of $\Fc$.

Because the stationary points of~\eqref{eqn:combineddiffusion} are not isolated,
there is no global asymptotic stability. The set of equilibria is the set of solutions 
to a system of degree-2 polynomials forming a singular algebraic variety. The equilibria 
lying at nonsingular points of this variety are, however, Lyapunov stable.

\begin{theorem}
\label{thm:LyapunovStability}
  The smooth stationary points of~\eqref{eqn:combineddiffusion} are Lyapunov stable.
\end{theorem}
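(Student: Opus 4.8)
The plan is to leverage the gradient structure already established: \eqref{eqn:combineddiffusion} is the negative gradient of $\Psi$ with respect to the inner product $\langle(x,\delta),(x',\delta')\rangle = \frac{1}{\alpha}x^Tx' + \frac{1}{\beta}\sum_e\tr(\delta_e^T\delta_e')$, and by the preceding lemma $\Psi\geq 0$ vanishes exactly on the equilibrium set $\mathcal{E}=\{(x,\delta):\delta x=0\}$. Hence every equilibrium is a \emph{global} minimizer of $\Psi$, with $\Psi=0$ there. Writing $G(x,\delta)=\delta x$ so that $\Psi=\tfrac12\norm{G}^2$ and $\mathcal{E}=G^{-1}(0)$, the strategy is to show that at a smooth equilibrium $\Psi$ is a Morse--Bott function and then to convert that nondegeneracy into Lyapunov stability.

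First I would compute $\operatorname{Hess}\Psi$ at an equilibrium $z^*=(x^*,\delta^*)$. Differentiating $\Psi=\tfrac12\langle G,G\rangle$ twice gives a term $\langle DG\,\cdot, DG\,\cdot\rangle$ together with a term proportional to $G(z^*)$; since $G(z^*)=\delta^*x^*=0$, the latter drops and one is left with $\operatorname{Hess}\Psi(z^*)=DG(z^*)^{*}DG(z^*)$, where $DG(z^*)[\dot x,\dot\delta]=\delta^*\dot x+\dot\delta\,x^*$. In particular $\operatorname{Hess}\Psi(z^*)$ is positive semidefinite and its kernel is exactly $\ker DG(z^*)$.

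The hypothesis enters next. At a nonsingular point of the variety $\mathcal{E}$ the tangent space $T_{z^*}\mathcal{E}$ equals the Zariski tangent space $\ker DG(z^*)$, so $\ker\operatorname{Hess}\Psi(z^*)=T_{z^*}\mathcal{E}$ and the Hessian is positive definite transverse to $\mathcal{E}$ --- precisely the Morse--Bott condition. (It is exactly at singular points that $\ker DG$ can strictly contain the tangent space, so this is where smoothness is used.) Since $\Psi$ is a polynomial, hence real-analytic, the \L ojasiewicz gradient inequality holds near $z^*$, and for a Morse--Bott minimum the exponent is $\tfrac12$: there is a neighborhood and a constant $c>0$ with $\norm{\nabla\Psi}\geq c\,\Psi^{1/2}$ (recall $\Psi(z^*)=0$). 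I would then integrate $\frac{d}{dt}\Psi^{1/2}=-\norm{\nabla\Psi}^2/(2\Psi^{1/2})\leq -\tfrac{c}{2}\norm{\nabla\Psi}$ along trajectories to bound the arclength by $\int_0^\infty\norm{\dot z}\,dt\leq\tfrac{2}{c}\Psi(z_0)^{1/2}$. Because $\Psi(z_0)\to 0$ as $z_0\to z^*$, trajectories starting near $z^*$ have arbitrarily small total length, so they remain in any prescribed neighborhood of $z^*$ and converge to a nearby equilibrium; this is Lyapunov stability.

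The hard part will be this final step. The equilibria are non-isolated, so the classical Lyapunov theorem for an isolated fixed point does not apply, and one must rule out trajectories sliding along $\mathcal{E}$ and drifting away from $z^*$; the \L ojasiewicz length estimate is exactly what forbids this. The accompanying subtlety is that the constant $c$ is only valid on a fixed neighborhood, so the argument must be bootstrapped --- choosing the initial ball small enough that the length bound keeps the whole trajectory inside the region where the inequality holds. A more hands-on alternative is to pass to Morse--Bott normal coordinates and exploit the decoupling of the contracting normal directions from the neutral tangential ones, but the required estimates are of comparable difficulty.
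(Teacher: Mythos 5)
Your argument is correct, but it takes a genuinely different route from the paper. The paper linearizes~\eqref{eqn:combineddiffusion} at the smooth equilibrium, observes that the kernel of the linearization coincides with the tangent space to the equilibrium variety $\{\delta x = 0\}$, concludes that the nearby equilibria form a center manifold, and invokes the reduction principle of Kelley: stability on the center manifold (trivial, since the dynamics there are identically zero) implies stability of the full system. You instead exploit the gradient structure directly: every equilibrium is a global minimum of the analytic function $\Psi$, the full-rank hypothesis makes $\Psi$ Morse--Bott there (kernel of the Hessian $=\ker DG(z^*)=$ tangent space to $\mathcal E$), and the {\L}ojasiewicz gradient inequality with exponent $\tfrac12$ yields a uniform bound on trajectory arclength, which gives stability \emph{and} convergence to a single nearby equilibrium. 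Both proofs are sound; yours buys more. The length estimate rules out the drift along the non-isolated equilibrium set more explicitly than the center-manifold reduction does, gives pointwise convergence rather than mere stability, and --- since $\Psi$ is a polynomial and every equilibrium is a global minimizer --- the general {\L}ojasiewicz inequality (with some exponent $\theta\in[\tfrac12,1)$) would extend the conclusion to the \emph{singular} stationary points as well, so smoothness is only needed to pin the exponent at $\tfrac12$, not for stability itself. Two small points to tie down: the gradient, Hessian, and arclength must all be measured in the modified inner product $\frac{1}{\alpha}x^Tx' + \frac{1}{\beta}\sum_e\tr(\delta_e^T\delta_e')$ (norm equivalence makes this harmless, but say so), and you should note that surjectivity of $DG$ near $z^*$ guarantees $\mathcal E$ is the entire critical set of $\Psi$ there, so no spurious critical points with $\delta x\neq 0$ interfere with the Morse--Bott normal form.
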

\begin{proof}
  It remains to show that given a stationary point $(x_*,\delta_*)$ at which the derivative of
  the map $(x,\delta) \mapsto \delta x$ is full rank, any neighborhood of $(x_*,\delta_*)$ 
  contains the forward orbit of a subneighborhood. 
  In a neighborhood of $(x_*,\delta_*)$, the set of equilibria
  of~\eqref{eqn:combineddiffusion} is a smooth manifold, given by the equation
  $\delta x = 0$. The tangent space at this point is the kernel of the map
  $(\delta,x) \mapsto \delta_* x + \delta x_*$. Meanwhile, the linearization
  of~\eqref{eqn:combineddiffusion} about $(x_*,\delta_*)$ is
  \begin{align*}
    \frac{dx}{dt} &= - \alpha(\delta_*^T\delta x_* + \delta_*^T\delta_* x) \\
    \frac{d\delta_e}{dt} &= -\beta\left[ \delta_e^T (x_*)_e (x_*)_e^T + (\delta_*)_e^T x_e (x_*)_e^T \right]
  \end{align*}
  These are zero if and only if $(x,\delta)$ satisfy $\delta_*x + \delta x_* = 0$,
  so the stationary set of the linearization is precisely the same as the
  tangent space at $(x_*,\delta_*)$. Therefore, the stationary points near
  $(x_*,\delta_*)$ describe a center manifold for the dynamics.
  Standard results on stability of the center manifold indicate that the stability of an
  equilibrium within the center-stable manifold is equivalent to its stability
  in the center manifold \cite{kelley_stability_1967}. Since $(x_*,\delta_*)$ is Lyapunov stable within the center
  manifold (because the dynamics are trivial), it is therefore Lyapunov stable.
\end{proof}

The regions of highest dimension of the variety $X$ are those where $\ker \delta = 0$ (for most
choices of $G$ and stalk dimensions). This would seem to indicate that we should
expect most trajectories to converge to points with $x = 0$. However, this is not the
case. 
\begin{theorem}\label{thm:conservedquantity}
  If one of the diagonal blocks of $\alpha \delta_0^T \delta_0 - \beta
  x_0^T x_0$ fails to be semidefinite, the trajectory of~\eqref{eqn:combineddiffusion}
  converges to a point $(x_\infty,\delta_\infty)$ with $x_\infty \neq 0$.
\end{theorem}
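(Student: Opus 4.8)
The plan is to exhibit a block-local conservation law for~\eqref{eqn:combineddiffusion} that is incompatible with $x_\infty = 0$. For each vertex $v$, consider the $v$-th diagonal block of $\alpha\delta^T\delta - \beta x x^T$, namely
\begin{equation}
Q_v \;=\; \alpha\sum_{v\face e}\Fc_{v\face e}^T\Fc_{v\face e}\;-\;\beta\, x_v x_v^T .
\end{equation}
(The outer product $x_0 x_0^T$, rather than the scalar $x_0^T x_0$, is what carries this block structure.) The central claim is that each $Q_v$ is constant along every trajectory, and I would prove this by matching time derivatives. Writing $r_{e,v}=\Fc_{v\face e}x_v-\Fc_{u\face e}x_u$ for the edge residual as seen from $v$, the structural dynamics read $\tfrac{d}{dt}\Fc_{v\face e}=-\beta\,r_{e,v}x_v^T$, while $g_v := (L_\Fc x)_v=\sum_{v\face e}\Fc_{v\face e}^T r_{e,v}$ and $\tfrac{d}{dt}x_v = -\alpha g_v$. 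A direct differentiation then gives
\begin{equation}
\frac{d}{dt}\Big(\alpha\sum_{v\face e}\Fc_{v\face e}^T\Fc_{v\face e}\Big)
= -\alpha\beta\big(g_v x_v^T + x_v g_v^T\big)
= \frac{d}{dt}\big(\beta\, x_v x_v^T\big),
\end{equation}
so that $\tfrac{d}{dt}Q_v = 0$.

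With the conservation law established, the remainder is short. As already argued following the Lyapunov function $\Psi$, every trajectory converges to a limit $(x_\infty,\delta_\infty)$ with $\delta_\infty x_\infty = 0$. Suppose for contradiction that $x_\infty = 0$. Then $(x_\infty)_v = 0$ for every vertex, so evaluating the conserved quantity at the limit yields $Q_v = \alpha\sum_{v\face e}(\Fc_\infty)_{v\face e}^T(\Fc_\infty)_{v\face e}$, a sum of positive semidefinite blocks and hence positive semidefinite. But $Q_v$ is conserved, so it equals its initial value $\alpha\sum_{v\face e}(\Fc_0)_{v\face e}^T(\Fc_0)_{v\face e}-\beta(x_0)_v(x_0)_v^T$, i.e.\ the $v$-th diagonal block of $\alpha\delta_0^T\delta_0 - \beta x_0 x_0^T$. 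If one such block fails to be semidefinite it is indefinite, hence possesses a strictly negative eigenvalue, and a positive semidefinite matrix cannot have one. This contradiction forces $x_\infty \neq 0$.

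The main obstacle is the conservation computation itself: one must see that the two contributions to $Q_v$ carry the \emph{same} symmetric, rank-two time derivative $-\alpha\beta(g_v x_v^T + x_v g_v^T)$. This is exactly where the coupling of the two flows is used --- the opinion flow feeds $g_v$ into $\tfrac{d}{dt}x_v$, while the expression flow feeds the same $g_v$ into $\tfrac{d}{dt}\sum_e\Fc_{v\face e}^T\Fc_{v\face e}$ --- and it is the precise balance of the $\alpha$ and $\beta$ terms that makes the difference conserved. A secondary point requiring care is the reading of ``fails to be semidefinite'' as \emph{indefinite} (possessing a strictly negative eigenvalue), which is the correct contrast with the positive semidefinite limit and is what makes the obstruction genuinely block-local and checkable from the initial data $(x_0,\delta_0)$ alone.
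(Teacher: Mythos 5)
Your proof is correct and follows essentially the same route as the paper: the paper sets $M=\alpha\delta^T\delta-\beta xx^T$, shows $\frac{d}{dt}\mathrm{diag}(M)=0$ by observing that the diagonal blocks of $\delta^TP_\delta(\delta xx^T)$ agree with those of $\delta^T\delta xx^T$, and concludes that an indefinite diagonal block obstructs $x\to 0$; your vertexwise computation of $\frac{d}{dt}Q_v$ is just an unpacked version of that same cancellation, and your limiting contradiction matches the paper's conclusion.
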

\begin{proof}
  Let $M = \delta^T\delta - xx^T$ and consider 
\begin{equation}
\begin{split}
  \frac{dM}{dt} &= \frac{d}{dt}
  \alpha\delta^T\delta - \beta x x^T\\
  &= \alpha[- \beta P_\delta(\delta x x^T)^T\delta - \beta\delta^TP_\delta(\delta x
  x^T) ]
  -\beta (-\alpha x x^T \delta^T \delta - \alpha\delta^T\delta x x^T)\\
  &= -\alpha\beta[(\delta^TP_\delta(x x^T\delta x x^T))^T + \delta^T P_\delta(\delta x x^T)]
  +\alpha \beta[(\delta^T\delta x x^T)^T + \delta^T\delta x x^T].
\end{split}
\end{equation}
Note that the diagonal block of $\delta^TP_\delta(\delta x x^T)$ corresponding to a
vertex $v$ is equal to $(\delta^T \delta x x^T)_{v,v}$, since the sparsity
pattern of the relevant block row of $\delta^T$ is the same as the sparsity
pattern imposed by the projection $P$. Thus, when we restrict to the diagonal
blocks, the derivatives cancel and we have $\frac{d}{dt} \text{diag}(M) = 0$.
Therefore, if $M_{v,v}$ is indefinite at $t = 0$, it must be indefinite for all $t$. 
In particular, this means that $xx^T$ cannot approach zero, since otherwise $M$ and
all of its diagonal subblocks would approach semidefiniteness.
\end{proof}

This condition implies that if any diagonal element of $M$ is negative, the
dynamics converge to a nonzero $x$. {\em A fortiori}, if $\tr(M) = \norm{\delta}_F^2 -
\norm{x}^2$ is negative, the limiting value of $x$ is nonzero. Thus, given any
initial $\delta_0$ and $x_0$, there exists some scaling factor $\kappa$ such that the
initial condition $(\delta_0, \kappa x_0)$ converges to a sheaf with a nontrivial global section.

Other relevant quantities are controlled during the combined diffusion dynamics.
\begin{theorem}
  The quantities $\norm{\delta}_F^2$, $\norm{x}^2$, $\norm{\delta x}^2$, and
  $\frac{\norm{\delta x}^2}{\norm{x}^2}$ are nonincreasing under the evolution of~\eqref{eqn:combineddiffusion}.
\end{theorem}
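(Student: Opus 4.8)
**The plan is to establish each of the four monotonicity claims by direct computation of the time derivative along trajectories of \eqref{eqn:combineddiffusion}, showing each is nonpositive.** The combined dynamics are
\[
\frac{dx}{dt} = -\alpha\,\delta^T\delta x, \qquad \frac{d\delta_e}{dt} = -\beta\,\delta_e x_e x_e^T,
\]
so I would compute each derivative using the product rule and the specific forms above, collecting terms into manifestly nonnegative expressions that are subtracted.

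First I would handle $\norm{x}^2$ and $\norm{\delta}_F^2$, which are the simplest. For the former, $\frac{d}{dt}\norm{x}^2 = 2x^T\frac{dx}{dt} = -2\alpha\, x^T\delta^T\delta x = -2\alpha\norm{\delta x}^2 \le 0$. For the latter, summing over edges, $\frac{d}{dt}\norm{\delta}_F^2 = 2\sum_e \tr\!\left(\delta_e^T \frac{d\delta_e}{dt}\right) = -2\beta \sum_e \tr(\delta_e^T\delta_e x_e x_e^T) = -2\beta\sum_e x_e^T\delta_e^T\delta_e x_e = -2\beta\sum_e\norm{\delta_e x_e}^2 \le 0$. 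Both are immediate; the main subtlety is bookkeeping the edge-decomposition of the Frobenius norm and recalling that $\norm{\delta x}^2 = \sum_e \norm{\delta_e x_e}^2$ since $\delta x$ is a $1$-cochain indexed by edges.

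Next I would treat $\norm{\delta x}^2 = x^T\delta^T\delta x = 2\Psi(x,\delta)$, which is just twice the Lyapunov function already analyzed in the Lemma preceding Theorem~\ref{thm:LyapunovStability}. That lemma shows $\frac{d}{dt}\Psi \le 0$, so $\norm{\delta x}^2$ is nonincreasing with no further work; I would simply invoke it, noting the explicit expression $\frac{d}{dt}\norm{\delta x}^2 = -2\alpha\norm{(\delta^T\delta)x}^2 - 2\beta\sum_e \norm{\delta_e x_e}^2 \norm{x_e}^2 \le 0$ derived there.

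The last and hardest quantity is the ratio $\frac{\norm{\delta x}^2}{\norm{x}^2}$, a Rayleigh-type quotient. Using the quotient rule, its derivative has the sign of $\frac{d}{dt}\norm{\delta x}^2 \cdot \norm{x}^2 - \norm{\delta x}^2\cdot \frac{d}{dt}\norm{x}^2$. Substituting the two derivatives computed above, this becomes
\[
\left(-2\alpha\norm{(\delta^T\delta)x}^2 - 2\beta\sum_e\norm{\delta_e x_e}^2\norm{x_e}^2\right)\norm{x}^2 + 2\alpha\norm{\delta x}^2 \cdot \norm{\delta x}^2.
\]
\textbf{The main obstacle is the $\alpha$-terms:} after dividing by $2\alpha$, these read $\norm{\delta x}^4 - \norm{(\delta^T\delta)x}^2\norm{x}^2$, and I must show this is $\le 0$. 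I would recognize this as a Cauchy--Schwarz inequality: writing $L = \delta^T\delta$ (symmetric PSD), one has $\norm{\delta x}^2 = x^T L x = \ip{x, Lx}$, and by Cauchy--Schwarz $\ip{x,Lx}^2 \le \norm{x}^2\norm{Lx}^2 = \norm{x}^2\norm{(\delta^T\delta)x}^2$, giving exactly $\norm{\delta x}^4 \le \norm{x}^2\norm{(\delta^T\delta)x}^2$. The $\beta$-terms, multiplied by $\norm{x}^2 \ge 0$, are already manifestly nonpositive. Hence the numerator of the quotient derivative is a sum of two nonpositive pieces, proving the ratio is nonincreasing. I would close by noting this Rayleigh-quotient monotonicity reflects that the dynamics preferentially damp the high-frequency (large-eigenvalue) components of $x$ under $\delta^T\delta$, consistent with the diffusive character of the flow.
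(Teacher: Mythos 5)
Your proof is correct and follows essentially the same route as the paper: direct differentiation for $\norm{x}^2$ and $\norm{\delta}_F^2$, invoking the Lyapunov lemma for $\norm{\delta x}^2$, and the quotient rule together with the inequality $(x^T\delta^T\delta x)^2 \leq \norm{x}^2\norm{\delta^T\delta x}^2$ for the Rayleigh quotient. The only divergences are minor: you obtain that last inequality from Cauchy--Schwarz applied to $\ip{x,\delta^T\delta x}$ where the paper diagonalizes $\delta^T\delta$ and invokes Jensen (the same inequality), and your formula $\frac{d}{dt}\norm{\delta}_F^2 = -2\beta\sum_e \norm{\delta_e x_e}^2$ is in fact the correct evaluation of $-2\beta\tr(\delta^T P_\delta(\delta x x^T))$ --- the extra factor of $\norm{x_e}^2$ in the paper's display appears to be a slip carried over from the Lyapunov computation, and in any case does not affect the sign.
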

\begin{proof}
That $\norm{\delta x}^2$ is decreasing is implied by the fact that $\Psi$ is
decreasing under~\eqref{eqn:combineddiffusion}. We evaluate the other
derivatives:
\begin{equation}
\begin{split}
  \frac{d}{dt} \norm{\delta}_F^2 &=  \frac{d}{dt} \tr(\delta^T\delta) \\
  &= -\beta\tr(\delta^TP_\delta(\delta x x^T) + P_\delta(\delta x x^T)^T\delta) \\
  &= -2\beta \sum_e (x_e^T \delta_e^T \delta_e x_e) x_e^T x_e \leq 0,
\end{split}
\end{equation}
\begin{equation}
\begin{split}
  \frac{d}{dt} \norm{x}^2 &= \frac{d}{dt} x^Tx \\
  &=-\alpha(x^T\delta^T\delta x + (\delta^T \delta x)^T x) \\
  &= - 2\alpha x^T\delta^T\delta x \leq 0.
\end{split}
\end{equation}
Finally,
\begin{equation}
  \frac{d}{dt} \frac{\norm{\delta x}^2}{\norm{x}^2} = \frac{ \norm{x}^2
  \frac{d}{dt}\norm{\delta x}^2 - \norm{\delta x}^2 \frac{d}{dt}
  \norm{x}^2}{\norm{x}^4}. 
\end{equation}
We know that $\frac{d}{dt} \norm{\delta x}^2 \leq -\alpha x^T
(\delta^T\delta)^2 x$, so this is bounded above by
\[
   \frac{-2\alpha \norm{x}^2 (\alpha x^T(\delta^T\delta)^2 x)
     + 2\alpha(x^T \delta^T \delta x)^2}{\norm{x}^4} 
   = 2\alpha \left[ \left( \frac{x^T\delta^T\delta x}{\norm{x}^2}  \right)^2
    -\frac{x^T(\delta^T\delta)^2 x}{\norm{x}^2}\right].
\]
Thus, $\frac{\norm{\delta x}^2}{\norm{x}^2}$ is decreasing if the inequality
\[ 
	\left( \frac{x^T\delta^T\delta x}{\norm{x}^2}\right)^2 
	\leq
  	\frac{x^T(\delta^T\delta)^2 x}{\norm{x}^2}
\]
holds. By taking $\norm x = 1$ and diagonalizing $\delta^T\delta$, we get the
equivalent inequality 
\[
	\sum_{i} \lambda_i^2 x_i \geq \left( \sum_i \lambda_i x_i \right)^2
\]
for $\lambda_i, x_i \geq 0$, $\sum_i x_i = 1$, which is simply Jensen's inequality.
\end{proof}

This last decreasing observable is the Rayleigh quotient of $L = \delta^T
\delta$ corresponding to $x$. That it is decreasing means $x(t)$ becomes a global section
of $\Fc(t)$ at least as quickly as it approaches zero. 

\begin{ex}
  {\em (Learning to lie, redux)} 
  The combined diffusion dynamics also enable an agent to learn to falsify
  opinions. Consider the same initial conditions as before, but run the combined
  dynamics with $\alpha = \beta = 1$. The sheaf and cochain converge again to a
  sheaf where one agent lies about their opinion. The opinion distribution
  changes somewhat, but because the discrepancy between opinions is so great,
  the sheaf is able to adapt more easily than the opinions themselves: 
  see Figure~\ref{fig:learntolie2}.
\end{ex}

% ********************************************************************************************
\begin{figure}
\label{fig:learntolie2}
  \centering
  \includegraphics[width=5.0in]{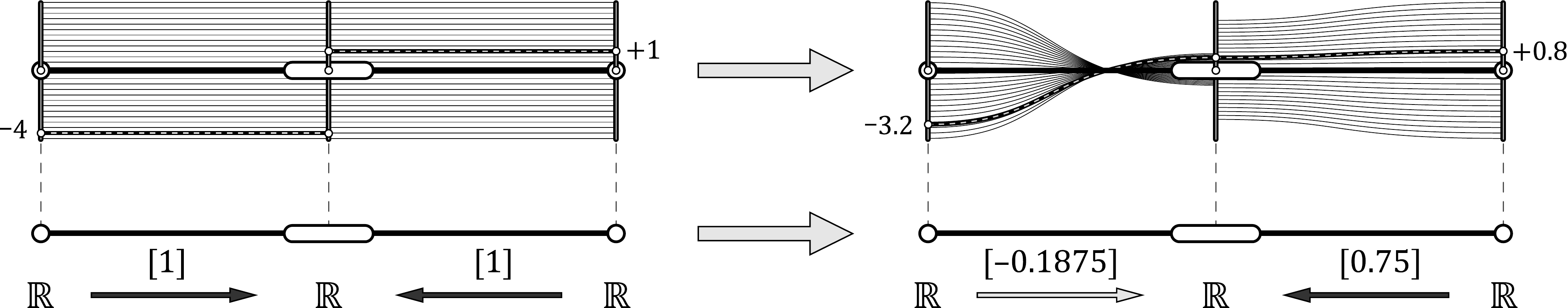}
  \caption{A constant sheaf over an edge with an initial opinion distribution that is highly 
  discordant [left] converges under the combined opinion-expression diffusion to a nonconstant 
  sheaf [right] in which both opinions and expressions have relaxed to come to consensus. Under
  this initial condition, the agent on the left has learned to lie.}
\end{figure}
% ********************************************************************************************

% =========================================================================================
\section{Nonlinear Laplacians}
\label{sec:NonlinearLaplacians}
% =========================================================================================
The sheaf Laplacian is constructed from the sheaf coboundary and its adjoint,
with an implicit isomorphism between $C^1$ and its dual given by the standard
inner product on the edge stalks. We can make this mapping explicit, and insert
a new function between the terms in order to produce nonlinear Laplacians with
new behaviors. 

Let $\phi_e: \Fc(e) \to \Fc(e)$ be a continuous but not-necessarily-linear map
for each edge $e$ of $G$, and define $\Phi: C^1(G;\Fc) \to C^1(G;\Fc)$ by
combining these edgewise maps. The corresponding nonlinear Laplacian is
$L_\Fc^\Phi = \delta_\Fc^T \circ \Phi \circ \delta_\Fc$. Because the nonlinear
map $\Phi$ is applied edgewise, $L_\Fc^\Phi x$ can still be computed locally in
the network. Thus, the nonlinear heat equation
\begin{equation}\label{eqn:nonlinearheat}
  \frac{dx}{dt} = -\alpha L_\Fc^\Phi x
\end{equation}
over a discourse sheaf $\Fc$ describes a form of
networked opinion dynamics.

One way to construct a nonlinear Laplacian is by beginning with a set of edge
potential functions. The heat equation on a sheaf is gradient descent with
respect to $x$ on the potential function $\Psi(x) = \frac{1}{2}\norm{\delta x}^2 =
\sum_{e} \frac{1}{2}\norm{\delta_e x}^2$. We can replace this potential function
with a new function defined edgewise:
\[\Psi(x) = \sum_{e} U_e(\delta_e x) = U(\delta x),\]
for once-differentiable edgewise potential functions $U_e: \Fc(e) \to \R$.
The gradient of this function at a point $x$ is $\nabla \Psi(x) = \delta^T \circ
\nabla U \circ \delta x$. This is therefore a nonlinear sheaf Laplacian
$L_\Fc^\Phi$ with $\Phi = \nabla U$. The heat equation for this nonlinear
Laplacian is precisely gradient descent on $\Psi$.  

Analysis of the heat equation is simplified for nonlinear Laplacians constructed
from edge potentials. For instance, if the potential functions $U_e$ are convex,
$\Psi$ serves as a Lyapunov function, ensuring stability of the dynamics. If each
$U_e$ has a local minimum at $0$, global sections of $\Fc$ will be stationary
points of the heat equation. Indeed, if each $U_e$ is radially unbounded with a
single local minimum at $0$, the long-term behaviors of the linear and nonlinear heat
equations agree.

\begin{proposition}
  For each edge $e$ of $G$, let $U_e: \Fc(e) \to \R$ be a differentiable,
  radially unbounded
function with a unique local minimum at $0$. Trajectories of the nonlinear heat
equation
  \[\frac{dx}{dt} = -\alpha L_\Fc^{\nabla U} x\]
  converge to the orthogonal projection of the initial condition onto $H^0(G;\Fc)$.
\end{proposition}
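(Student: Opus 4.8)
The plan is to treat the flow as gradient descent of the edgewise potential $\Psi(x) = U(\delta x) = \sum_e U_e(\delta_e x)$, exactly as the preceding discussion sets up, and to combine one conserved quantity with a Lyapunov/LaSalle argument. After normalizing each $U_e$ so that $U_e(0) = 0$, the hypotheses (unique local minimum at $0$, radial unboundedness) force $U_e(w) \ge 0$ with equality iff $w = 0$; hence $\Psi \ge 0$, with $\Psi(x) = 0$ iff $\delta x = 0$, i.e. iff $x \in H^0(G;\Fc)$. The conserved quantity is the projection onto $H^0$: since $\nabla\Psi(x) = \delta^T(\nabla U)(\delta x) \in \im\delta^T = (\ker\delta)^\perp = H^0(G;\Fc)^\perp$, the velocity is always orthogonal to $H^0$, so $\pi_{H^0}x(t)$ is constant, equal to $x_\parallel := \pi_{H^0}x(0)$. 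Writing $x = x_\parallel + x_\perp$, the whole trajectory lives on the affine slice $S = x_\parallel + (\ker\delta)^\perp$, and since $\delta$ is injective on $(\ker\delta)^\perp$ it suffices to prove $\delta x(t) \to 0$.

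Next I would check that $\Psi$ is a bona fide Lyapunov function on $S$. Along trajectories $\frac{d}{dt}\Psi = -\alpha\norm{\nabla\Psi}^2 \le 0$, vanishing exactly at equilibria. For boundedness I need $\Psi|_S$ radially unbounded: on $(\ker\delta)^\perp$ there is $c > 0$ with $\norm{\delta x_\perp} \ge c\norm{x_\perp}$, and since each $U_e \ge 0$ is radially unbounded, $\Psi \to \infty$ as $\norm{x_\perp}\to\infty$. Thus sublevel sets of $\Psi|_S$ are compact and trajectories are bounded, so LaSalle's invariance principle applies: every $\omega$-limit point lies in the largest invariant subset of $\{\frac{d}{dt}\Psi = 0\} = \{\nabla\Psi = 0\}$, namely the equilibrium set.

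The decisive step is to show every equilibrium satisfies $\delta x = 0$. Pairing the equilibrium equation $\delta^T\nabla U(\delta x) = 0$ with $x$ gives $0 = \ip{\delta x, \nabla U(\delta x)} = \sum_e \ip{\delta_e x, \nabla U_e(\delta_e x)}$. If each $U_e$ is radially monotone, meaning $\ip{w, \nabla U_e(w)} > 0$ for all $w \ne 0$, then every summand is nonnegative and vanishes only when $\delta_e x = 0$, so the sum vanishes only when $\delta x = 0$; equilibria are then exactly the global sections. Intersecting the equilibrium set with $S$ leaves the single point $x_\parallel$, whence the $\omega$-limit set is $\{x_\parallel\}$ and $x(t) \to x_\parallel = \pi_{H^0}x(0)$. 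Equivalently and more directly, $V(x) = \frac12\norm{x - x_\parallel}^2$ satisfies $\dot V = -\alpha\ip{\delta x, \nabla U(\delta x)} < 0$ off the target, giving a strict Lyapunov function for convergence to $x_\parallel$.

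I expect the radial-monotonicity property to be the real content and the main obstacle. In the scalar edge-stalk case it is automatic: a one-dimensional $U_e$ with a unique local minimum at $0$ and radial unboundedness is strictly increasing on each half-line, so $w\,U_e'(w) > 0$ for $w \ne 0$. In higher dimensions, however, the bare ``unique local minimum'' hypothesis permits a ray to dip through saddle-type regions, so $\ip{w,\nabla U_e(w)}$ can turn negative and the summed potential $U\circ\delta$ can in principle acquire spurious equilibria on $\im\delta$. To close the argument cleanly I would either read the hypothesis as ``$U_e$ strictly increasing along rays'' (equivalently, star-shaped sublevel sets, which is precisely the condition yielding $\ip{w,\nabla U_e(w)} > 0$) or invoke convexity of the $U_e$; otherwise one can only guarantee convergence for generic initial data, since the stable manifolds of any spurious saddles have positive codimension.
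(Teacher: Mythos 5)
Your proof follows essentially the same route as the paper's: the velocity lies in $\im \delta^T$ so the projection onto $H^0(G;\Fc)$ is conserved, and $\Psi = U(\delta x)$ serves as a Lyapunov function on the orthogonal complement, with LaSalle finishing the argument. The one place you diverge is in characterizing the equilibria, and there your added care exposes a real soft spot. The paper simply asserts that radial unboundedness plus a unique local minimum at $0$ forces $\nabla U$ to vanish only at $0$, and then concludes $L_\Fc^{\nabla U} x^\perp = 0$ iff $x^\perp = 0$. As you observe, neither step is automatic when an edge stalk has dimension at least two: a coercive function with a unique local minimum can still have saddle critical points (Morse theory only constrains the alternating count), and even where $\nabla U(\delta x) \neq 0$ it could a priori lie in $\ker \delta^T$. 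Your pairing argument $0 = \langle \delta x, \nabla U(\delta x)\rangle = \sum_e \langle \delta_e x, \nabla U_e(\delta_e x)\rangle$ is the cleaner way to handle the second issue (it is exactly the device the paper itself uses later in the proof of Theorem~\ref{thm:nonlinearlaplaciankernel}), and it makes transparent that the property actually needed is radial monotonicity, $\langle w, \nabla U_e(w)\rangle > 0$ for $w \neq 0$ --- automatic for one-dimensional edge stalks and for convex $U_e$, but not implied by the stated hypotheses in general. So: same architecture as the paper, with a correctly identified gap that the paper's own proof glosses over; your proposed fix (strengthen the hypothesis to radial monotonicity or convexity) is the right repair, and your explicit verification that sublevel sets of $\Psi$ restricted to $\im\delta^T$ are compact is a detail the paper leaves implicit.
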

\begin{proof}
  First observe that $\frac{dx}{dt} \in \im \delta^T$ and hence is always
  orthogonal to $\ker \delta = H^0(G;\Fc)$. Decomposing $x = x^\parallel +
  x^\perp$, where $x^\parallel$ is the orthogonal projection of $x$ onto
  $H^0(G;\Fc)$, we restrict our attention to $\im \delta^T$ and the evolution of $x^\perp$. 

  Letting $\Psi(x^\perp) = U(\delta x^\perp)$, we have a function
  vanishing precisely when $\delta x^\perp = 0$, which holds precisely when
  $x^\perp = 0$. Further, 
\[\frac{d\Psi}{dt} = \ip{\nabla \Psi(x^\perp),\frac{d
      x^\perp}{dt}} = \ip{L_\Fc^{\nabla U} x^\perp,-\alpha L_{\Fc}^{\nabla
      U}x^\perp} \leq 0,\]
with equality precisely when $L_\Fc^{\nabla U} x^\perp = 0$. Because $U$ 
is radially unbounded and has a unique local minimum at $0$, $\nabla U$ vanishes
only at $0$, and hence $L_\Fc^{\nabla U} x^\perp = 0$ if and only if $x^\perp =
0$. Thus $\frac{d\Psi}{dt}$ vanishes only at the origin and is therefore a Lyapunov
function for the nonlinear heat equation restricted to $\im \delta^T$. Radial
unboundedness of $U$ implies radial unboundedness of $\Psi$ and therefore the
origin is globally asymptotically stable, meaning $x^\perp \to 0$. Therefore,
$\lim_{t \to \infty} x(t) = x^\parallel(0)$.  
\end{proof}

% =========================================================================================
\section{Bounded Confidence}\label{sec:BoundedConfidence}
% =========================================================================================
Nonlinear Laplacian dynamics (and in particular edge potential dynamics) can be
used to extend the popular {\em bounded confidence} opinion dynamics to discourse
sheaves. The central idea behind bounded confidence opinion dynamics is that
individuals only have confidence in the opinions of neighbors that are
sufficiently similar to their own, and thus only take these opinions into
account when updating. The reigning discrete-time model of bounded confidence
dynamics is based on that of Hegselmann and Krause
\cite{hegselmann_opinion_2002}, with extensions to multidimensional opinions. In
this model, each agent has a threshold $D$, and only pays heed to opinions of
neighbors that are within distance $D$ of their own opinion. Continuous-time
versions of this model have been analyzed, both with sharp discontinuous
thresholds \cite{blondel_continuous-time_2010,ceragioli_continuous-time_2011} as
well as smooth transitions between confidence and no-confidence
\cite{ceragioli_continuous_2012}.

We will here discuss a continuous-time version of bounded confidence dynamics
for sheaves, using smooth threshold functions. These will be represented in
terms of edgewise potential functions. Given a discourse sheaf $\Fc$ on a graph
$G$, for each edge $e$ of $G$ choose a threshold $D_e$ and a differentiable function $\psi_e:
[0,\infty) \to \R$ such that $\psi_e'(y) = 0$ for $y \geq D_e$ and $\psi_e'(y) >
0$ for $y < D_e$. The edge
potential function $U_e: \Fc(e) \to \R$ is then given by $U_e(y_e) =
\psi_e(\norm{y_e}^2)$. The gradient of this potential is $\nabla U_e(y_e) =
\psi_e'(\norm{y_e}^2)y_e$, and therefore the associated nonlinear Laplacian is
given by 
\begin{equation}
	L_\Fc^{\nabla U}x = \delta^T \diag(\psi_e'(\norm{\delta_e x}^2)) \delta x.
\end{equation}
This formula can be written vertexwise as
\begin{equation}
	(L_\Fc^{\nabla U}x)_v = \sum_{u,v \face e} \Fc_{v \face
    e}^T\psi_e'(\norm{\Fc_{v \face e} x_v - \Fc_{u \face e}x_u}^2)(\Fc_{v \face
    e}x_v - \Fc_{u \face e} x_u).
\end{equation}
In comparison with the formula for the standard sheaf Laplacian, there is a
nonlinear scaling factor depending on the discrepancy over each edge. This nonlinear
Laplacian $L_\Fc^{\nabla U}$ can be used to generate bounded confidence dynamics.
\begin{theorem}
	\label{thm:nonlinearlaplaciankernel}
  	Suppose that $U_e(y_e) = \psi_e(\norm{y_e}^2)$ for some $\psi_e: [0,\infty)
  	\to \R$ with $\psi_e'(y) = 0$ for $y \geq D_e$ and $\psi_e'(y) > 0$ for $y < D_e$. 
	Then an opinion distribution $x \in C^0(G;\Fc)$ is harmonic with respect to $L_\Fc^{\nabla U}$
	if and only if 
	for every edge $e = u \sim v$ with $\Fc_{v \face e} x_v \neq
	\Fc_{u \face e} x_u$, $\norm{\Fc_{v \face e} x_v - \Fc_{u \face e} x_u}^2 \geq D_e$. 
\end{theorem}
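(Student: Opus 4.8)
The plan is to prove both directions by reducing the harmonicity condition $L_\Fc^{\nabla U} x = 0$ to an edgewise statement about the scalar factors $\psi_e'(\norm{\delta_e x}^2)$. I would keep in mind throughout that $L_\Fc^{\nabla U} x = \delta^T \Phi(\delta x)$, where $\Phi = \nabla U$ acts edgewise by $\Phi(\delta x)_e = \psi_e'(\norm{\delta_e x}^2)\,\delta_e x$, and that the hypothesis on $\psi_e$ guarantees $\psi_e' \geq 0$ everywhere (strictly positive below the threshold, identically zero at or above it). This nonnegativity is the feature I expect to exploit.

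The ``if'' direction I would dispatch immediately. If each edge $e$ satisfies either $\delta_e x = 0$ or $\norm{\delta_e x}^2 \geq D_e$, then in the first case $\Phi(\delta x)_e = \psi_e'(0)\cdot 0 = 0$, while in the second case $\psi_e'(\norm{\delta_e x}^2) = 0$ again forces $\Phi(\delta x)_e = 0$. Hence $\Phi(\delta x)$ vanishes componentwise, and \emph{a fortiori} $L_\Fc^{\nabla U} x = \delta^T \Phi(\delta x) = 0$, so $x$ is harmonic.

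The ``only if'' direction is the step requiring care, and it is where the positivity of $\psi_e'$ does the real work. The naive attempt---to deduce $\Phi(\delta x) = 0$ directly from $\delta^T\Phi(\delta x) = 0$---fails, since $\delta^T$ has a nontrivial kernel $(\im\delta)^\perp$ and one only learns that $\Phi(\delta x)$ is orthogonal to $\im\delta$. The fix I would use is to pair the harmonicity relation against $x$ itself: using only that $\delta^T$ is the adjoint of $\delta$,
\[
  0 = \ip{x, L_\Fc^{\nabla U}x} = \ip{\delta x, \Phi(\delta x)} = \sum_{e} \psi_e'(\norm{\delta_e x}^2)\,\norm{\delta_e x}^2 .
\]
Each summand is a product of the nonnegative quantities $\psi_e'(\norm{\delta_e x}^2)$ and $\norm{\delta_e x}^2$, so the vanishing of the total sum forces every summand to vanish.

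It then remains only to read off the edgewise conclusion. Fixing an edge $e$ with $\Fc_{v\face e}x_v \neq \Fc_{u\face e}x_u$, i.e. $\delta_e x \neq 0$, I have $\norm{\delta_e x}^2 > 0$, so the vanishing of the corresponding summand forces $\psi_e'(\norm{\delta_e x}^2) = 0$; by the defining property of $\psi_e$ (strictly positive on $[0,D_e)$) this is possible only when $\norm{\delta_e x}^2 \geq D_e$, which is exactly the claim. I expect the pairing step---recognizing that one should test $L_\Fc^{\nabla U}x = 0$ against $x$ to obtain a sum of manifestly nonnegative terms, rather than attempting to invert $\delta^T$---to be the one genuine idea in the argument, with everything else reducing to bookkeeping.
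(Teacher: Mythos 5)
Your proof is correct and follows essentially the same route as the paper's: the forward direction by observing that $\nabla U(\delta x)$ vanishes edgewise under the stated conditions, and the converse by pairing $L_\Fc^{\nabla U}x=0$ against $x$ to obtain $\ip{\nabla U(\delta x),\delta x}=\sum_e \psi_e'(\norm{\delta_e x}^2)\norm{\delta_e x}^2=0$ and exploiting the nonnegativity of each summand. No substantive differences.
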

\begin{proof}
  If $x \in H^0(G;\Fc)$, then $\delta x = 0$, and since $\nabla U_e(0) = 0$,
  $\delta^T \nabla U (\delta x) = \delta^T 0 = 0$. More generally, $\nabla U_e(\delta x)_e
  = 0$ whenever either $(\delta_x)_e = 0$ or $\norm{(\delta x)_e}^2 \geq D_e$. But
  $(\delta x)_e = \Fc_{v \face e} x_v - \Fc_{u \face e} x_u$.

  Conversely, if $L_\Fc^{\nabla U} (x) = 0$, then $\nabla U(\delta x) \in \ker \delta^T$;
  equivalently, $\nabla U(\delta x)$ is orthogonal to $\im \delta$. In particular,
  $\ip{\nabla U(\delta x),\delta x}$ must be zero. Letting $y = \delta x$, we have
  \begin{equation}
	  \ip{\nabla U(y),y}
	  = \sum_e \ip{\nabla U_e(y_e),y_e} = \sum_e
	  \ip{\psi_e'(\norm{y_e}^2)y_e,y_e}.
  \end{equation}
These terms are all nonnegative, and $\ip{\psi_e'(\norm{y_e}^2)y_e,y_e} = 0$ if and
only if either $y_e = 0$ or $\psi_e'(\norm{y_e}^2) = 0$. The first condition holds
precisely when $(\delta x)_e = 0$, and the second holds precisely when
$\norm{(\delta x)_e}^2 \geq D_e$. 
\end{proof}

Naturally, one constructs the bounded confidence Laplacian to study the
corresponding diffusion dynamics
\begin{equation}\label{eqn:bcdynamics}
  \frac{dx}{dt} = - L_\Fc^{\nabla U}(x).
\end{equation}
Theorem~\ref{thm:nonlinearlaplaciankernel} identifies the equilibria of these
dynamics. Global sections of $\Fc$ are still equilibria, but there are more. Given $x
\in C^0(G;\Fc)$, we construct a subgraph $G_x$ of $G$ and a sheaf $\Fc_x$ on
$G_x$ as follows: % (see Figure~\ref{fig:boundedconfidence}): 
$G_x$ has the same vertices as $G$, but only contains the edges $e$ where
$\norm{(\delta x)_e} < D_e$. The sheaf $\Fc_x$ is the same as $\Fc$ but with the
data for edges not in $G_x$ removed. So a 0-cochain $x \in C^0(G;\Fc)$ is a
fixed point for~\eqref{eqn:bcdynamics} if and only if it is a global section of
$\Fc_x$. We denote the subset of $C^0(G;\Fc)$ for which the corresponding
sheaf is $\Fc_x$ by $K_{x}$. That is, 
\begin{equation}
	K_x = \{x \in C^0(G;\Fc) : \norm{\delta_e x}^2 < D_e 
		\text{ for } 
	e \in G_x, \norm{\delta_e x}^2 \geq D_e \text{ for } e \notin G_x\}.
\end{equation}
Suppose that $x_0$ is a fixed point with $\norm{(\delta x_0)_e}^2 \neq D_e$ for all edges
$e$. Sufficiently close to $x_0$, the dynamics behave like the standard
diffusion dynamics on $\Fc_{x_0}$.

\begin{theorem}\label{thm:bcstability}
  Let $x_0$ be a fixed point of~\eqref{eqn:bcdynamics} lying in the interior of
  $K_{x_0}$---that is, with $\norm{(\delta
    x_0)_e} > D_e$ for all $e \notin G_{x_0}$. There exists a neighborhood $W$
  of $x_0$ such that for every initial condition $x \in W$, the trajectory
  of~\eqref{eqn:bcdynamics} converges to the nearest global section of $\Fc_{x_0}$.
\end{theorem}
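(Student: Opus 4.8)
The plan is to exploit the fact that near a fixed point $x_0$ lying strictly inside $K_{x_0}$, the set of ``active'' edges (those with $\norm{\delta_e x}^2 < D_e$, on which $\psi_e' > 0$) is locally frozen, so that the bounded-confidence dynamics~\eqref{eqn:bcdynamics} coincide with the nonlinear heat flow of the \emph{fixed} sheaf $\Fc_{x_0}$, and then to run a Lyapunov argument against that reduced flow. First I would record the consequence of the interior hypothesis. Since $x_0$ is a fixed point it is a global section of $\Fc_{x_0}$ (by the characterization following Theorem~\ref{thm:nonlinearlaplaciankernel}), so $\norm{\delta_e x_0}^2 = 0 < D_e$ for $e \in G_{x_0}$ while $\norm{\delta_e x_0}^2 > D_e$ for $e \notin G_{x_0}$ by hypothesis. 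As $x \mapsto \norm{\delta_e x}^2$ is continuous and all these inequalities are strict, there is a ball $B(x_0,\epsilon)$ throughout which the active edge set is exactly $G_{x_0}$. On this ball every inactive edge contributes $\psi_e' = 0$, so $L_\Fc^{\nabla U} x = \delta_0^T D \delta_0 x$, where $\delta_0$ is the coboundary of $\Fc_{x_0}$ and $D = \diag(\psi_e'(\norm{\delta_e x}^2))$ is positive definite on the edges of $G_{x_0}$.

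Next, following the device in the proofs of Theorem~\ref{thm:sheafdiffusionsoln} and the preceding proposition, I would observe that on $B(x_0,\epsilon)$ the velocity $\tfrac{dx}{dt} = -\delta_0^T D \delta_0 x$ lies in $\im\delta_0^T = (\ker\delta_0)^\perp$. Hence the orthogonal projection $x^\parallel = P_{\ker\delta_0}x$ onto the global sections $H^0(G;\Fc_{x_0}) = \ker\delta_0$ is conserved along trajectories, and the candidate limit --- the nearest global section of $\Fc_{x_0}$ to the initial condition --- is precisely $x^\parallel(0)$. Writing $x = x^\parallel + x^\perp$, I would take $V(x) = \norm{x^\perp}^2$ as a Lyapunov function. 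Since $\delta_0 x = \delta_0 x^\perp$ and $x^\perp$ ranges over $\im\delta_0^T$ (so that $\delta_0 x^\perp = 0 \iff x^\perp = 0$), a direct computation gives $\tfrac{dV}{dt} = -2\ip{\delta_0 x^\perp, D\,\delta_0 x^\perp} \le 0$, with strict inequality whenever $x^\perp \neq 0$, because $D$ is positive definite on active edges.

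The main obstacle is confinement: everything above is valid only while the trajectory remains in $B(x_0,\epsilon)$, where the active set is frozen, and a priori a trajectory could drift until some inactive edge becomes active and changes the governing sheaf. I would resolve this using the monotonicity of $V$ itself. Because $x^\parallel$ is conserved and $\norm{x^\perp}$ is nonincreasing, one has $\norm{x(t)-x_0} \le \norm{x(t)-x^\parallel(0)} + \norm{x^\parallel(0)-x_0} \le \norm{x^\perp(0)} + \norm{x(0)-x_0} \le 2\norm{x(0)-x_0}$, using that $x_0 \in \ker\delta_0$ and that orthogonal projection is $1$-Lipschitz. Choosing $W = B(x_0,\epsilon/3)$ and running a standard continuation argument --- a trajectory from $W$ cannot first reach $\partial B(x_0,\epsilon)$ without violating this bound --- keeps every such trajectory inside $B(x_0,\epsilon)$ for all time, so the reduction to the $\Fc_{x_0}$ flow persists globally in time.

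Finally, with the trajectory confined, $V$ is a genuine Lyapunov function on the compact region $\bar B(x_0,\epsilon)$ whose derivative vanishes only on the set $\{x^\perp = 0\}$, whose sole point in the conserved affine slice $x^\parallel(0) + \im\delta_0^T$ is $x^\parallel(0)$. LaSalle's invariance principle then forces $x^\perp(t)\to 0$, whence $x(t)\to x^\parallel(0)$, the nearest global section of $\Fc_{x_0}$, completing the proof. I expect the confinement step to be the only delicate point; once the active set is shown to stay frozen, the convergence is a routine application of the same orthogonality-plus-Lyapunov reasoning already used for the linear and nonlinear heat equations earlier in the paper.
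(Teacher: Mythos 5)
Your proposal is correct, and its overall architecture matches the paper's: freeze the active edge set near $x_0$ so the flow reduces to a weighted heat flow for the fixed sheaf $\Fc_{x_0}$, decompose $x = x^\parallel + x^\perp$ relative to $H^0(G_{x_0};\Fc_{x_0})$, observe that $x^\parallel$ is conserved while $\norm{x^\perp}^2$ strictly decreases away from the section space, and finish with a LaSalle-type argument. The one step you handle genuinely differently is confinement, which you correctly identify as the only delicate point. The paper builds the neighborhood $W$ from explicit edgewise inequalities --- it requires $\norm{x^\perp}$ to be small compared with $\bigl(\norm{\delta_e x^\parallel}^2 - D_e\bigr)/\bigl(2\norm{\delta_e x^\parallel}\norm{\delta_e}\bigr)$ for each inactive edge and with $\sqrt{D_e}/\norm{\delta_e}$ for each active edge --- and then checks directly that $\norm{\delta_e x(t)}^2$ stays on the correct side of $D_e$ edge by edge, so the trajectory never leaves $K_{x_0}$. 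You instead use the soft a priori bound $\norm{x(t)-x_0} \le \norm{x^\perp(t)} + \norm{x^\parallel(0)-x_0} \le 2\norm{x(0)-x_0}$, which is valid because $x_0 \in \ker\delta_0$ and orthogonal projection is $1$-Lipschitz, combined with openness of the interior of $K_{x_0}$ and a continuation argument. Both are sound; yours is cleaner and avoids the edgewise bookkeeping, while the paper's version yields explicit quantitative control of the basin $W$ in terms of how far $x_0$ sits from the boundary of $K_{x_0}$. The final convergence steps also differ cosmetically: the paper descends the potential $\Psi_{x_0}(x^\perp)=\sum_{e\in G_{x_0}}U_e(\delta_e x^\perp)$, whereas you apply LaSalle to $\norm{x^\perp}^2$ directly on the conserved affine slice; these are interchangeable once confinement is established.
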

\begin{proof}
  Let $W$ be a neighborhood of $x_0$ contained in $K_{x_0}$ satisfying:
  \begin{enumerate}
  \item
    If $x \in W$, its orthogonal projection $x^\parallel$ onto
    $H^0(G_{x_0};\Fc_{x_0})$ is in $W$.
  \item
    If $x \in W$ is not a fixed point with $x = x^\parallel + x^\perp$, 
     then $\norm{x^\perp} < \frac{\norm{\delta_e
        x^\parallel}^2-D_e}{2\norm{\delta_e x^\parallel}\norm{\delta_e}}$
    for all $e \notin G_{x_0}$.
  \item
    If $x = x^\parallel + x^\perp \in W$ is not a fixed point, then
    $\norm{x^\perp}^2 < \frac{D_e}{\norm{\delta_e}^2}$ for all $e \in G_{x_0}$. 
  \end{enumerate}
  Such a neighborhood exists by continuity of the linear maps $\delta_e$
  and because we can choose arbitrarily small tubular neighborhoods around the
  set of fixed points.

  Consider $x = x^\parallel + x^\perp \in W$. We will show that $x$ converges
  to $x^\parallel$, which is in $U$ by condition (1). Note that as long as $x \in K_{x_0}$,
  $\frac{d}{dt}x^\parallel = 0$. Similarly, for $x \in K_{x_0}$,
  $\frac{d}{dt}\frac{1}{2} \norm{x^\perp}^2 = \ip{x^\perp,\frac{d}{dt}x^\perp} =
  \ip{x^\perp,-\delta^T \nabla U(\delta x^\perp)} = - \ip{\delta x^\perp,\nabla U(\delta
    x^\perp)} \leq 0$, by the argument in the proof of
  Theorem~\ref{thm:nonlinearlaplaciankernel}. This is zero only if $x^\perp =
  0$, and hence $\norm{x^\perp}^2$ is strictly decreasing as long as it is
  nonzero and $x \in K_{x_0}$. In particular, for any initial condition $x \in
  W$ with $x^\perp \neq 0$ there is some maximal time interval
  $[0,T)$ on which $\frac{d}{dt}\norm{x^\perp}< 0$ is strictly decreasing.

  Combining this with (2) and (3) above reveals that on this time
  interval, for every $e \notin G_{x_0}$,
\begin{align*}
  \norm{\delta_e x }^2 &\geq \norm{\delta_e x^\parallel}^2 - 2\norm{\delta_e
    x^\perp}\norm{\delta_e x^\parallel} \\
  & \geq \norm{\delta_e x^\parallel}^2 - 2\norm{\delta_e}
    \norm{x^\perp}\norm{\delta_e x^\parallel} \\
  &>  \norm{(\delta x^\parallel)_e}^2 - 2\norm{\delta_e}
    \norm{\delta_e x^\parallel}\frac{\norm{\delta_e
        x^\parallel}^2-D_e}{2\norm{\delta_e x^\parallel}\norm{\delta_e}} = D_e
\end{align*}
by condition (2).
Similarly, for $e \in G_{x_0}$,
\begin{equation*}
  \norm{(\delta x)_e}^2  =  \norm{(\delta x^\perp)_e}^2
  \leq \norm{\delta_e}^2\norm{x^\perp}^2 
  < D_e
\end{equation*}
by condition (3). Since these relations hold at time $t = 0$ and
$\norm{x^\perp}$ is decreasing, they hold for all $t \in [0,T)$ as well.
This ensures that $x$ remains in $K_{x_0}$ and does not approach the boundary on
this interval. If $T$ is finite, we may thus conclude that $\frac{d}{dt}\norm{x^\perp} < 0$ at $t
= T$ as well, so it must be that that $T = \infty$. Thus $x$ remains in $K_{x_0}$ and
$\norm{x^\perp}$ is strictly decreasing for all time.   

It remains to show that trajectories converge: that $x^\perp \to 0$. This
happens because $\Psi_{x_0}(x^\perp) = \sum_{e \in G_{x_0}}U_e(\delta_e x^\perp)$
is strictly decreasing
along trajectories and vanishes precisely when $x^\perp = 0$.
\end{proof}

% =========================================================================================
\section{Antagonistic Dynamics}
\label{sec:AntagonisticDynamics}
% =========================================================================================
One may wish to model certain agent pairs who, rather than attempting to
move toward mutual consensus, instead try to increase the distance
between their expressed opinions. Such a dynamical structure might correspond to
relationships of distrust or enmity. We will call such a relationship a 
\introduce{negative edge} and partition the edge set of the graph $G$ into 
negative edges, $E_-$, and complementary positive edges $E_+$.

One way to model relationships over negative edges in a discourse sheaf is to change the sign of one
restriction map on each negative edge. Instead of a tactful lie, this
might be interpreted as an ``agreement to disagree'' between the neighbors,
making a stable pair of opinions one where the expressed opinions are the same
in magnitude but opposite in sign. In the case where the discourse sheaf is
simply a constant sheaf, this is essentially the approach taken 
in~\cite{altafini_dynamics_2012,altafini_consensus_2013}. 

A better approach is to leave the discourse sheaf untouched and modify the
dynamics appropriately. Let $E_-$ be the set of negative links between agents,
and $E_+$ its complement, the set of positive links. We define edge potential
functions associated to this signing:
\[
	U_e(y) =
		\begin{cases} 
			\norm{y}^2 & e \in E_+ \\
		    -\norm{y}^2 & e \in E_-
	  	\end{cases}.
\]
The corresponding nonlinear Laplacian $L_\Fc^{\nabla U}$ describes dynamics where agents attempt to
move toward consensus as quickly as possible over positive edges and away from
consensus as quickly as possible over negative edges. Let $S: C^1(G;\Fc) \to
C^1(G;\Fc)$ be the block diagonal matrix whose blocks corresponding to positive
edges are $I$ and whose blocks corresponding to negative edges are $-I$. The
nonlinear Laplacian for this edge potential is the matrix
$L_\Fc^S = \delta^T S \delta$, so it is in fact a linear operator.\footnote{As is common
  in mathematics, ``nonlinear'' here really means ``not-necessarily-linear.''}
These dynamics are more akin to those considered in~\cite{altafini_predictable_2015}.

The kernel of $L_\Fc^S$ contains $H^0(G;\Fc)$, but may be larger.  
Further, this signed sheaf Laplacian is not necessarily positive semidefinite, so its
corresponding diffusion dynamics may be unstable. One simple situation in which
this happens is as follows:
\begin{proposition}
  Suppose $E_-$ is a cutset of $G$, dividing the graph into subgraphs $G_0$ and
  $G_1$. If the natural map $H^0(G,G_1;\Fc) \to H^0(G_0;\Fc)$ is not
  surjective --- that is, if there exists a local section on $G_0$ which does not
  extend by zero to a global section of $G$ --- then $L_\Fc^S$ is indefinite.
\end{proposition}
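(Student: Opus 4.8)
The plan is to produce a single $0$-cochain $x$ with $x^T L_\Fc^S x < 0$, which exhibits a negative eigenvalue of $L_\Fc^S = \delta^T S \delta$ and hence the failure of positive semidefiniteness responsible for the unstable diffusion (this is what I take ``indefinite'' to mean in context). The cohomological hypothesis is precisely engineered to hand us such a vector.

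First I would unpack the hypothesis using the cutset structure. Since $E_-$ separates $G_0$ from $G_1$, every edge is internal to $G_0$, internal to $G_1$, or a cut edge in $E_-$, so $E_+ = E(G_0)\sqcup E(G_1)$. As vector spaces $C^0(G,G_1;\Fc) = \bigoplus_{v\in V(G_0)}\Fc(v) = C^0(G_0;\Fc)$, and the natural map is the inclusion of kernels: an element of $H^0(G,G_1;\Fc)$ is a cochain supported on $V(G_0)$ whose coboundary vanishes on $E(G_0)$ \emph{and} on the cut edges, whereas $H^0(G_0;\Fc)$ only demands vanishing on $E(G_0)$. Thus non-surjectivity yields a local section $s\in H^0(G_0;\Fc)$ whose extension by zero fails the cut-edge conditions: there is a cut edge $e = u\sim v$ with $u\in V(G_0)$, $v\in V(G_1)$ and $\Fc_{u\face e}s_u \neq 0$. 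This is exactly the stated ``local section on $G_0$ which does not extend by zero to a global section of $G$.''

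Next I would evaluate the quadratic form on the extension-by-zero $x$ of $s$ (so $x_w = s_w$ for $w\in V(G_0)$ and $x_w = 0$ for $w\in V(G_1)$):
\[
x^T L_\Fc^S x = (\delta x)^T S (\delta x) = \sum_{e\in E_+}\norm{(\delta x)_e}^2 - \sum_{e\in E_-}\norm{(\delta x)_e}^2 .
\]
Every positive edge contributes zero: for $e\in E(G_0)$ we have $(\delta x)_e = (\delta s)_e = 0$ because $s$ is a section on $G_0$, and for $e\in E(G_1)$ both endpoints lie in $V(G_1)$ where $x$ vanishes. Each negative edge contributes the nonpositive term $-\norm{(\delta x)_e}^2$, and the distinguished cut edge gives $(\delta x)_e = -\Fc_{u\face e}s_u \neq 0$. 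Hence $x^T L_\Fc^S x = -\sum_{e\in E_-}\norm{(\delta x)_e}^2 < 0$, so $L_\Fc^S$ is not positive semidefinite.

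The step I expect to be the real obstacle is the translation in the second paragraph --- converting ``does not extend by zero to a global section'' into a concrete nonzero coboundary on a single cut edge --- since this is where the cutset assumption is essential: it forces all non-cut edges to contribute trivially so that only the negative terms survive. A secondary subtlety is the word ``indefinite''; literal indefiniteness also requires a direction with $x^T L_\Fc^S x > 0$, which exists exactly when some cochain disagrees over a positive edge while agreeing over all negative edges (restricting $L_\Fc^S$ to that subspace recovers the positive semidefinite form $\norm{\delta x}^2$). This holds generically but can degenerate --- for instance when $E_+ = \emptyset$, where $L_\Fc^S = -\delta^T\delta$ is merely negative semidefinite --- so I would record the operative conclusion as the failure of positive semidefiniteness, which is all the instability statement needs.
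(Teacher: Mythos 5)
Your proposal is correct and follows essentially the same route as the paper's proof: take the local section on $G_0$ not in the image of the natural map, extend it by zero, and observe that the quadratic form $x^T L_\Fc^S x$ collapses to $-\sum_{e\in E_-}\norm{(\delta x)_e}^2 < 0$. Your closing caveat about ``indefinite'' versus ``not positive semidefinite'' is a fair observation --- the paper's own proof likewise only exhibits a negative direction --- but it does not change the substance of the argument.
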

\begin{proof}
  Take some nonzero $x \in H^0(G_0;\Fc)$ which is not in the image of this map.
  Extending $x$ by zero to the rest of $G$, we have $\delta_e x \neq 0$ for some
  $e \in E_-$, but $\delta_e x = 0$ for all $e \in E_+$. Thus, $\ip{x,L_\Fc^Sx}
  = \sum_{e \in E_-} \ip{\delta_e x,-\delta_e x}$. This is negative because
  there is at least one nonzero term.
\end{proof}

For the case of the constant sheaf on $G$, the relevant map is never surjective,
so a graph with a cutset of negative edges always has an indefinite signed
Laplacian, and hence unstable opinion dynamics. 

% =========================================================================================
\section{Conclusions}
\label{sec:Conc}
% =========================================================================================
This work introduced a novel and compelling ensemble of techniques
from cellular sheaves, sheaf cohomology, and sheaf Laplacians, to model and
analyze opinion dynamics over networks. The increase in mathematical sophistication
comes with an increase in expressiveness of the model: private opinions, personalized 
expressions of opinions, evolution of communication structures, stubbornness, 
obfuscation, and bounded confidence are all easily expressed using discourse sheaves
and sheaf diffusion dynamics. Despite this, there is no increase in difficulty of 
computation or analysis. The language of harmonic extension converts subtle 
solvability conditions to simple linear-algebraic cohomology computations. 
Despite the imposing terminology, sheaf cohomology is a concise and 
computable tool for determining feasibility of solving problems of existence, extension, 
approximation, controllability, and observability on sheaf dynamics.  

The diffusive dynamics studied here have been linear or near-linear.
Deeper analysis of the nonlinear dynamical systems \eqref{eqn:combineddiffusion} and
\eqref{eqn:bcdynamics} is needed, as well as the many other possible extensions.
These include incorporation of probabilistic elements or including 
multi-agent interactions using sheaves on simplicial complexes. 

Since they are defined in abstract structural terms, sheaf dynamics are applicable to more than simply 
opinion dynamics. This paper may be regarded as an introduction to the dynamics of cellular sheaves 
as a broad generalization of network dynamics, with social networks and opinion dynamics serving 
as a running example. Other examples of networked dynamics effected by local evolution
operators on rich data structures may be found in, e.g., neuroscience and game theory, at least.
For the sake of accessibility, much of the language and techniques of sheaf theory and sheaf cohomology
have been excised from this paper. A full incorporation of sheaf-theoretic operations would increase
the precision, concision, and generality of this work. 

%\bibliographystyle{siamplain}

%\bibliography{sheafspectra}

\end{document}